\documentclass{article}

\usepackage[english]{babel}

\usepackage[letterpaper,top=2cm,bottom=2cm,left=3cm,right=3cm,marginparwidth=1.75cm]{geometry}

\usepackage{amsfonts,amsmath,amssymb,amsbsy,amsthm,enumerate}

\usepackage{graphicx}
\usepackage[colorlinks=true, allcolors=blue]{hyperref}
\usepackage{tikz}
\usetikzlibrary{positioning, backgrounds, fit}
\usepackage[font=small,labelfont=bf, width=.85\textwidth]{caption}
\usepackage{authblk}
\usepackage{mathrsfs}
\usepackage{blindtext}
\usepackage{verbatim}
\usepackage{multicol}
\usepackage{multirow}
\usepackage{array}
\usepackage{booktabs}
\usepackage{amsmath}
\usepackage{booktabs}
\usepackage{pgffor}
\usepackage{xparse}
\usepackage[table]{xcolor}

\newtheorem{theorem}{Theorem}
\newtheorem{conjecture}{Conjecture}

\newtheorem{claim}{Claim}

\newtheorem*{lemma*}{Lemma}
\newtheorem*{claim*}{Claim}

\newtheorem{apptheorem}{Theorem}

\newenvironment{proofc}{{\noindent \textit{Proof of Claim.}}}
{\hfill \Large{$\diamond$}}

\title{Coloring graphs with independence number two and no odd clique immersions}

\author{Henry Echeverría\footnote{ Instituto de Ingenier\'ia Matem\'atica-CIMFAV, Universidad de Valpara\'iso, Chile.
Email: {\tt henry.echeverria@postgrado.uv.cl}. Supported by ANID BECAS/DOCTORADO NACIONAL 21231147.
}
\qquad
Jessica McDonald\footnote{Auburn University, Department of Mathematics and Statistics, Auburn USA.
  Email: {\tt mcdonald@auburn.edu}.   
	Supported in part by Simons Foundation Grant \#845698 and NSF grant DMS-2452103.}
}

\date{}

\begin{document}
\maketitle

\begin{abstract} We study the chromatic number of graphs that exclude a clique as a strong odd immersion and have independence number two. Given a graph $G$ and $t\in\mathbb{Z}^+$, we prove that if $\alpha(G)\leq 2$ and $G$ has no strong odd $K_t$-immersion, then $\chi(G)\leq \lceil \frac{3(t-1)}{2}\rceil$. 
\end{abstract}

\section{Introduction}

In this paper every graph is simple, that is, it contains no loops or parallel edges; we refer the reader to \cite{Diestel} for terms not defined here.

Given graphs $G, H$, we say that $G$ contains the graph $H$ as an \emph{immersion} ($G$ has an \emph{$H$-immersion}) if there exists an injective function $\varphi\colon V(H)\rightarrow V(G)$ and a set of pairwise edge-disjoint paths $\mathcal{P}$ in $G$ such that for every $uv\in E(H)$, there exists $P_{uv}\in\mathcal{P}$ with endpoints $\varphi(u)$ and~$\varphi(v)$. The vertices in $\varphi(V(H))$ are called the \emph{terminals} of the immersion; if no path in $\mathcal{P}$ contains a terminal as an interior vertex then we say that the immersion of $H$ in $G$ is \emph{strong}. If all paths in $\mathcal{P}$ have odd length, then we say that the immersion of $H$ in $G$ is \emph{totally odd} (or just \emph{odd}). 

Clique immersions are connected to coloring via a series of long-standing conjectures that we shall discuss shortly. Of particular interest is the case when the \emph{independence number} $\alpha(G)$ (the maximum size of an independent set in graph $G$) is two. In our main result, which follows, the \emph{chromatic number} $\chi(G)$ of a graph $G$ is the least $k$ such that $G$ is \emph{$k$-colorable}, i.e., has an assignment of $1, 2, \ldots, k$ to its vertices such that adjacent vertices receive different colors.

\begin{theorem} \label{thm:main}  Let $t\in\mathbb{Z}^+$ and let $G$ be a graph. If $\alpha(G)\leq 2$ and $G$ has no strong odd $K_t$-immersion, then $\chi(G)\leq \lceil \tfrac{3}{2}(t-1)\rceil$.\end{theorem}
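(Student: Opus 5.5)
We argue by minimal counterexample: take $G$ with $\alpha(G)\le 2$, no strong odd $K_t$-immersion, and $\chi(G)>\lceil\tfrac32(t-1)\rceil$, with $|V(G)|$ minimum. Minimality makes $G$ vertex-critical, so every vertex has degree at least $\lceil\tfrac32(t-1)\rceil$. We may also assume $t\ge 3$, since small $t$ is trivial.

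\textbf{Step 1: structure from $\alpha\le2$.} For every vertex $v$, the non-neighbourhood $\overline{N}(v)=V(G)\setminus N[v]$ is a clique, because $\alpha(G)\le 2$. Criticality also gives the standard fact that a graph with $\alpha\le 2$ and no induced matching of size two among non-edges can be colored using pairs. Concretely, $\chi(G)\le |V(G)|-\nu(\overline G)$, where $\nu(\overline G)$ is the size of a maximum matching of the complement: color classes are non-edges (pairs) and singletons. So we look for a large matching $M$ in $\overline G$. If $|M|$ is large enough that $|V(G)|-|M|\le\lceil\tfrac32(t-1)\rceil$, we are done. Otherwise $\overline G$ has small maximum matching. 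By Gallai--Edmonds/Tutte, this forces a set $S$ whose removal leaves many odd components in $\overline G$. Since components of $\overline G$ correspond to parts that are complete to each other in $G$, this gives a large clique-like structure in $G$.

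\textbf{Step 2: building the odd immersion.} Fix a maximum clique $K$ in $G$. If $|K|\ge t$ we are done, since a clique is trivially a strong odd immersion. Otherwise we try to extend $K$ using the matched non-adjacent pairs. The key gadget is as follows. Suppose $u,v$ are non-adjacent and both are complete to a set $T$ of terminals, and $x\notin T$ is a common neighbour of $u$ and $v$. Then the path $u\,x\,v$ has length $2$, which is even. So instead we seek length-$3$ paths $u\,a\,b\,v$ through non-terminal edges. Because every vertex has large degree and non-neighbourhoods are cliques, each non-adjacent pair $u,v$ has many such $3$-paths, and we route them edge-disjointly greedily. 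The terminal set is chosen as $K$ together with one endpoint of a selection of $\overline G$-matching edges. Counting degrees then shows that each new terminal needs at most about $t$ routed paths, while the available edges number about $\tfrac32(t-1)$ per vertex. The factor $\tfrac32$ arises because a set of $2k$ vertices spanning $k$ non-edges needs only $k$ colors yet contributes $2k$ potential terminals.

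\textbf{Step 3: main obstacle.} The hard part is the edge-disjoint routing of odd paths that avoid terminals, when many terminal pairs are non-adjacent. I would first try a Hall-type argument. Assign to each missing terminal pair $\{u,v\}$ an edge $ab$ with $a\in N(u)\setminus T$ and $b\in N(v)\setminus T$, with all assigned edges distinct and each edge $ua$ used at most once. For the count, I would use that $\overline N(u)$ is a clique and that $u$ has at least $\lceil\tfrac32(t-1)\rceil-(t-1)\approx\tfrac{t-1}{2}$ non-terminal neighbours, which should supply enough edges $ab$. If the greedy count fails, I would fall back to a separate case: a dense pair of cliques there would contradict the matching bound from Step 1.
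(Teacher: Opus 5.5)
There is a genuine gap. The step that actually produces a strong odd $K_t$-immersion (your Steps~2--3) is asserted rather than proved, and your Step~3 concedes this.

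Your length-$3$ gadget $u\,a\,b\,v$ is the right local tool, but a per-vertex degree count cannot justify routing all such paths greedily. Paths for different missing terminal pairs compete for the same middle edges $ab$ and for the edges at shared endpoints. Nothing in the choice ``maximum clique $K$ plus one endpoint of some $\overline G$-matching edges'' controls this contention, and the fallback case is never specified.

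Step~1 does not deliver what you claim either. When $\alpha(G)\le 2$ the color classes are exactly singletons and matched non-edges, so $\chi(G)=n-\nu(\overline G)$ holds with equality (criticality plays no role). Hence the ``we are done'' branch is vacuous for a counterexample. Tutte--Berge then only gives a set $S$ with $\mathrm{odd}(\overline G-S)-|S|=2\chi(G)-n$. Nothing in your argument prevents this from being $0$, since you have no upper bound on $n$. Even with the bound $n\le 3(t-1)$ it is only guaranteed to be at least $2$, which is not ``many'' odd components. And Step~2 does not use this structure anyway.

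The missing ideas are exactly what make the routing tractable in the paper.
\begin{enumerate}
\item \textbf{Bounding $n$.} Every $n$-vertex graph with $\alpha\le 2$ has a strong odd $K_{\lceil n/3\rceil}$-immersion (Theorem~\ref{thm:VergStrong}). So $n\le 3(t-1)\le 2k-2$ with $k=\chi(G)$. This, not your pairing heuristic, is where $\tfrac32$ first enters.
\item \textbf{A join decomposition.} Gallai's theorem (Theorem~\ref{Gallai}) for $k$-vertex-critical graphs on at most $2k-2$ vertices gives a partition $X_1,X_2$ complete to each other in $G$, i.e.\ $\overline G$ is disconnected. This is where a Gallai--Edmonds instinct should land, with $S=\emptyset$.
\item \textbf{Induction and tightness.} Strong odd clique immersions on the two sides of a join combine, so induction gives $\chi(G_i)\le\lceil\tfrac32 t_i\rceil$ with $t_1+t_2\le t-1$. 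Tightness then forces $t_1,t_2$ odd and $t_1+t_2=t-1$.
\item \textbf{Routing through the other side.} Only now are length-$3$ odd paths routed, from a leftover vertex $v\in Z_i=X_i\setminus M_i$ to the unreached terminals of $T_i$. They go through the \emph{other} side of the join, e.g.\ $(v,v',b_v,t)$ or $(v,z',z_v,t)$, so they avoid the two fixed immersions and $G[T_1,T_2]$. This solves all but at most $|B_j|+1$ edges at $v$.
\item \textbf{Where $\tfrac32$ is used again.} The bound $\delta(G)\ge\tfrac32(t-1)$ shows no further path can be added. Otherwise $|B_1|+|B_2|\ge t$, and the clique $B_1\cup B_2$ would be a $K_t$.
\item \textbf{Finish.} Further claims reduce to $|Z_1|=2$, where a recoloring of $G_1$ gives the final contradiction.
\end{enumerate}
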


Theorem \ref{thm:main} is an immersion-analog of a result of K\"{u}hn, Sauermann, Steiner, and Wigderson (Proposition~1.5 in \cite{KuhnSauermanSteinerWigderson2024}); their result is stated identically to ours except with  ``odd $K_t$-minor''  replacing  ``strong odd $K_t$-immersion''. For K\"{u}hn, Sauerman, Steiner, and Wigderson,  the constant $\tfrac{3}{2}$ is optimal, as shown via their disproof of the so-called ``Odd-Hadwiger Conjecture'' in their same paper.  We however conjecture that the constant $\tfrac{3}{2}$ in Theorem \ref{thm:main} is not best possible, and we discuss this in more detail following our proof of Theorem \ref{thm:main} in the next section.

Theorem \ref{thm:main} makes a positive contribution towards the following difficult conjecture.

\begin{conjecture}\label{conj:strongOdd} Let $t\in\mathbb{Z}^+$ and let $G$ be a graph. If $G$ has no strong odd $K_t$-immersion, then $\chi(G)<t$.
\end{conjecture}

When the word ``odd'' is removed from Conjecture \ref{conj:strongOdd}, the conjecture is due to either Lescure and Meyniel \cite{LM89} or Abu-Khzam and Langston \cite{AbuLangston}, depending on whether you also remove the word ``strong'' or not. The word ``odd'' was first added by Churchley \cite{churchley2017odd}, and is significant in that it is an extension to arbitrarily dense graph classes; it is easy to see that no complete bipartite graph can contain an odd $K_3$-immersion, while the class of graphs without a $K_t$-immersion are known to have bounded minimum degree \cite{DeVosKMMO2010}. The full form of Conjecture \ref{conj:strongOdd} was first put forward by Jim\'enez, Quiroz and Thraves Caro~\cite{JimenezQuirozThraves}.

Conjecture \ref{conj:strongOdd} is known for $t\leq 4$  (Zang \cite{Z98}, Thomassen \cite{Thomassen}) and for some very special graph classes (eg. line graphs of constant multiplicity multigraphs \cite{JimenezQuirozThraves}, Kneser graphs \cite{Echeverria1}, certain graph products \cite{Echeverria2}). The best general bound towards Conjecture \ref{conj:strongOdd} is due to Kawarabayashi~\cite{K13}, who showed (as a consequence of his work on subdivisions) that every such $G$ has $\chi(G)\leq 79t^2/4$. Very recently, McFarland \cite{McFarland} has shown that if the word ``strong'' is removed from Conjecture \ref{conj:strongOdd} one can get a bound of $\chi(G)<\mathcal{O}(t)$, albeit with large constants ($\chi(G) \leq 700,000t + 32,000,000$). When the word ``odd'' is removed from Conjecture \ref{conj:strongOdd}, results improve: up to $t=7$ is known  (DeVos, Kawarabayashi, Mohar, and Okamura \cite{DKMO}), and the bound $\chi(G)<3.54t + 4$ can be obtained (Gauthier, Le and Wollan \cite{GauthierLW2019}).

The class of graphs $G$ with no $K_t$-immersion and with $\alpha(G)\leq 2$ has actually been well-studied, particularly in the context of the following conjecture.

\begin{conjecture}\label{conj:vergara}\emph{(Vergara \cite{Vergara2017})} Let $G$ be an $n$-vertex graph with $\alpha(G)\leq 2$. If $G$ has no $K_t$-immersion, then $n < 2t$. 
\end{conjecture}

Note that Conjecture \ref{conj:vergara} is a weakening of Conjecture \ref{conj:strongOdd} in the case of $\alpha(G)\leq 2$ (and with both ``strong'' and ``odd'' removed), since for any $n$-vertex graph $G$ with $\alpha(G)\leq 2$, we know $\chi(G)\geq \tfrac{n}{\alpha(G)}=\tfrac{n}{2}$ (and hence $\chi(G)<t$ implies $n<2t$). Vergara \cite{Vergara2017} proved Conjecture \ref{conj:vergara} if $n < 2t$ is replaced by $n < 3t$, and this was improved to $2 \lfloor \frac{n}{5} \rfloor <t$ by 
Gauthier, Le, and Wollan \cite{GauthierLW2019}. Recently, Botler, Fernandes, Lintzmayer, Lopes, Mishra, Netto and Sambinelli \cite{BotlerFernandesLintzmayerLopesMishraNettoSambinelli2025} proved Conjecture \ref{conj:vergara} for graphs with bounded maximum degree in terms of $n$.  These results towards Conjecture \ref{conj:vergara}, with the exception of Vergara's own, were proved \emph{with} the extra conditions of strong and odd for the immersion, and  Quiroz~\cite{Q1} pointed out to us that Vergara's proof can be improved in this way as well (using techniques from a paper of Bustamante, Quiroz, Stein, Zamora \cite{BustamanteQSZ2021}). We include a proof of this improved result of Vergara in our appendix for completeness; we will use this result (Theorem \ref{thm:VergStrong}) in our proof of Theorem \ref{thm:main}.

One hope for generalizing Theorem \ref{thm:main} would be to get bound on $\chi(G)$ involving a fixed $\alpha(G)$. Since $\chi(G)\leq n$, a result of this type is implied by work of Bustamante, Quiroz, Stein, and Zamora \cite{BustamanteQSZ2021} who proved the following: \emph{If $t\in\mathbb{Z}^+$ and $G$ is an $n$-vertex graph with no strong odd $K_t$-immersion, then $\lfloor \frac{n}{2.25(\alpha(G)-1)}\rfloor\leq t$.} When $\alpha(G) \geq 4$ we cannot really improve this, but when $\alpha(G)=3$ their result says that $\lfloor \frac{n}{2.25(2)}\rfloor\leq t$ and we can make a modest improvement to $\chi(G)\leq 4(t-1)$. We include this argument in the following section along with our main proof.

\section{Proof of Theorem \ref{thm:main}}

We say a graph $G$ is \emph{$k$-vertex-critical} if it has chromatic number $k$ but $G-v$ is  $(k-1)$-colorable for every $v\in V(G)$. Given a graph $G$ and vertex sets $X, Y\subseteq V(G)$, we let $G[X]$ be the subgraph of $G$ induced by $X$ and we let $G[X,Y]$ be the bipartite subgraph of $G$ induced by the bipartition $(X, Y)$. We will use the following classic result of Gallai.

\begin{theorem} \emph{(Gallai \cite{Gallai1963}) } \label{Gallai}
Let $k \geq 2$ be an integer, and let $G$ be a $k$-vertex-critical graph on $n$ vertices. If $n \leq 2k - 2$, then there exists a partition of $V(G)$ into two non-empty sets $X_1,X_2$ such that $G[X_1, X_2]$ is a complete bipartite graph. 
\end{theorem}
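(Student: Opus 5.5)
We argue by contradiction through the complement. Let $\overline{G}$ be the complement of $G$. A partition $X_1,X_2$ with $G[X_1,X_2]$ complete bipartite exists exactly when $\overline{G}$ is disconnected. So we assume $\overline{G}$ is connected and aim to show $n\ge 2k-1$. The key translation is that a proper colouring of $G$ is a partition of $V(G)$ into cliques of $\overline{G}$. In particular, any matching $M$ of $\overline{G}$ yields a colouring of $G$ with $n-|M|$ colours: pair the matched vertices and give every other vertex its own colour. Hence $k=\chi(G)\le n-\nu(\overline{G})$, where $\nu$ denotes matching number. If $n\le 2k-2$, this gives $\nu(\overline{G})\le n-k\le k-2$. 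So $\overline{G}$ has a fairly small matching number relative to $n$, and most colour classes in any optimal colouring must be singletons.

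Next we bring in criticality. For each $v$, fix a $(k-1)$-colouring of $G-v$. It partitions $n-1\le 2k-3$ vertices into $k-1$ classes, so it contains at least one singleton class $\{u\}$. Among all such colourings (over all choices of $v$) we choose one that minimizes the number of singleton classes, or more precisely one that maximizes the matching in $\overline{G}$ formed inside colour classes. If $u$ were adjacent in $G$ to every other vertex, then $\{u\},V(G)\setminus\{u\}$ would be the desired complete bipartite partition, which would make $\overline{G}$ disconnected. So $u$ has a non-neighbour $w$ in $G$, that is, an edge $uw$ of $\overline{G}$. We then try to move $u$ into the class of $w$. Where this fails because $u$ has a $G$-neighbour $w'$ in that class, we continue along $\overline{G}$-edges from $w'$. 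This is an alternating-path exchange argument, exactly analogous to augmenting paths for matchings, with the colour classes playing the role of matched pairs or cliques. Connectivity of $\overline{G}$ guarantees that such paths can always be started. The goal is either to merge two classes, giving a $(k-2)$-colouring of $G-v$ (impossible), or, after adding $v$ back, to obtain a $(k-1)$-colouring of $G$ (also impossible).

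The cleanest way to organise this is to apply the Gallai--Edmonds structure theorem to $\overline{G}$. Let $D$ be the set of vertices missed by some maximum matching, $A$ its neighbourhood, and $C$ the rest. Criticality says that deleting any vertex $v$ drops $\chi$ by one. Combined with $\chi(G-v)\le (n-1)-\nu(\overline{G}-v)$, this forces strong restrictions: vertices in $C$ cannot exist in a critical graph with this bound tight, and every component of $\overline{G}[D]$ must be a clique (factor-critical and contributing exactly one colour). Connectivity of $\overline{G}$ then forces $A\neq\emptyset$ whenever there are at least two $D$-components. A counting argument comparing the number of colours ($|A|$ plus the number of $D$-components, roughly) with $n$ then yields $n\ge 2k-1$.

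The main obstacle is this last step. We must show that a single vertex of $A$, which is adjacent in $\overline{G}$ to several $D$-cliques, can be used to save a colour. If that fails, criticality must be violated for some vertex in one of those cliques. I would first try to handle it by an explicit recolouring: put the $A$-vertex together with a clique from $D$, then use factor-criticality to cover the remaining components. This is the point where I expect the inequalities to be tight and delicate.
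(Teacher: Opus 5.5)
The paper does not prove this statement; it quotes it from Gallai. Your proposal therefore has to stand on its own, and as written it has a genuine gap. The decisive step is explicitly left open, and the structural claims meant to lead up to it are not justified.

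\textbf{Why the claimed steps do not go through.} Combining criticality with $\chi(G-v)\le (n-1)-\nu(\overline G-v)$ gives only $\nu(\overline G-v)\le n-k$. That already follows from $\nu(\overline G-v)\le \nu(\overline G)\le n-k$. Upper bounds on $\chi$ cannot force conclusions such as ``$C=\emptyset$'' or ``each component of $\overline G[D]$ is a clique''. Moreover, the Gallai--Edmonds decomposition of all of $\overline G$ is not tied to colourings. An optimal colouring may have classes of size at least three, so $\chi(G)\le n-\nu(\overline G)$ need not be tight, and a maximum matching of $\overline G$ need not come from any optimal colouring. Finally, your end-game aims to build a better colouring. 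An alternating-path exchange by itself only shows there is no augmenting path, i.e.\ that the $2$-element classes of a suitably chosen optimal colouring form a maximum matching of the subgraph they live in. It does not produce a $(k-1)$-colouring of $G$ or a $(k-2)$-colouring of $G-v$.

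\textbf{The missing idea and how to close the gap.} What is missing is a \emph{lower} bound on $\chi(G-a)$ for a well-chosen vertex $a$. Your idea of minimising singleton classes is the right extremal choice, and it can be completed as follows.
\begin{enumerate}
\item Write $H=\overline G$ and $\theta$ for the clique-partition number, so $\chi(G[X])=\theta(H[X])$ for all $X\subseteq V(G)$.
\item Take a $k$-colouring $\mathcal Q$ of $G$ with the fewest singleton classes. Since $n\le 2k-2$, it has a set $S$ of at least two singletons. Let $M$ be its $2$-element classes and $U=S\cup V(M)$. Let $b$ be the number of classes of size at least $3$, with union $W$.
\item $M$ is a maximum matching of $H[U]$, since an augmenting path would give a $(k-1)$-colouring. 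Apply Gallai--Edmonds to $H[U]$, not to $H$, obtaining $D_U,A_U,C_U$.
\item Every $y\in D_U$ becomes a singleton after switching along an even alternating path, with the same number of classes and singletons. If $y$ had an edge to $z$ in a class $Q$ of size at least $3$, you could replace $\{y\},Q$ by $\{y,z\},Q-z$, reducing the number of singletons. Hence $N_H(D_U)\subseteq D_U\cup A_U$.
\item $H[D_U]$ has $|A_U|+|S|\ge 2$ components and $H$ is connected, so $A_U\neq\emptyset$.
\item Now $H-A_U$ is the disjoint union of the components $D^j$ of $H[D_U]$ and of $H[C_U\cup W]$. In (a switched copy of) $\mathcal Q$, each of these pieces is covered by whole classes. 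Optimality of $\mathcal Q$ then gives $\theta(H[D^j])=(|D^j|+1)/2$ and $\theta(H[C_U\cup W])=|C_U|/2+b$.
\item Summing and comparing with $|\mathcal Q|=|S|+|M|+b$ gives $\chi(G-A_U)=k$. Hence $\chi(G-a)\ge k$ for any $a\in A_U$, contradicting vertex-criticality.
\end{enumerate}
Note that criticality is used at a barrier vertex $a\in A_U$, not at the singleton you started from.
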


We are now ready to prove our main result. As mentioned in the introduction, this is an immersion-analog of a result of K\"{u}hn, Sauerman, Steiner, and Wigderson \cite{KuhnSauermanSteinerWigderson2024}, and indeed our proof starts off very similar to theirs. After Claim \ref{claim:case2} however, there is a divergence and we need new ideas to find the immersions we are looking for. 

\begin{proof}[Proof of Theorem \ref{thm:main}]
 Let $G$ be a vertex-minimal counterexample for Theorem \ref{thm:main}, so in particular
$\alpha=\alpha(G)\leq 2$, $G$ has no strong odd $K_t$-immersion, and $k:= \chi(G)>  \lceil \tfrac{3}{2}(t-1)\rceil$. By Theorem \ref{thm:VergStrong} we get $\frac{n}{3} \leq \lceil \frac{n}{3}\rceil \leq t-1$. This implies that $\frac{n}{2} \leq \frac{3}{2}(t-1) \leq \lceil \frac{3}{2}(t-1)\rceil \leq k-1$ and $n\leq 2k-2$.

Note that $G$ is $k$-critical by minimality. Since $n\leq 2k-2$ we apply Theorem \ref{Gallai} and get a partition of $V(G)$ into non-empty sets $X_1, X_2$ such that $x_1 x_2 \in E(G)$ for all $x_1 \in X_1$ and $x_2 \in X_2$. Let $G_1 := G[X_1]$ and $G_2 := G[X_2]$, and note that we have $\chi(G) = \chi(G_1) + \chi(G_2)$. For $i \in \{1, 2\}$, let $t_i$ denote the largest positive integer such that $G_i$ has $K_{t_i}$ as a strong odd immersion; note that $G$ has a strong odd immersion of $K_{t_1+t_2}$, so $t_1+t_2\leq t-1$. Since $\alpha_i:=\alpha(G_i)\leq \alpha\leq 2$, by minimality we get that 
$\chi(G_i ) \leq \lceil \frac{3}{2}t_i \rceil$ for $i \in \{1, 2\}$. In particular, this means that
\begin{align}\label{eq:t1t2}
         \chi(G)=\chi(G_1)+\chi(G_2) \leq \left \lceil\tfrac{3}{2}t_1\right\rceil  + \left \lceil\tfrac{3}{2}t_2\right\rceil \leq \tfrac{3}{2}t_1  + \tfrac{3}{2}t_2 + \tfrac{1}{2} +\tfrac{1}{2} \leq \tfrac{3}{2}(t-1)+1\leq \chi(G).
    \end{align}
Note that (\ref{eq:t1t2}) implies that $t_1, t_2$ are both odd, $t_1+t_2=t-1$, and $k=\chi(G)=\tfrac{3}{2}(t-1)+1$. It also implies that $\chi(G_i)=\lceil\tfrac{3}{2}t_i\rceil=\tfrac{3}{2}t_i+\tfrac{1}{2}$ for $i=1, 2$. Since $G$ is $k$-vertex-critical we know that $\delta(G)\geq k-1$, and since we may assume that $t\geq 5$ (as otherwise the result holds) we also know that $\delta(G) \geq k-1=\tfrac{3}{2}(t-1)\geq t+1$.

For $i \in \{1, 2\}$, let $M_i \subseteq X_i$ be chosen as an inclusion-wise minimal subset of $X_i$ such that $G[M_i]$ still contains $K_{t_i}$ as a strong odd immersion.

\begin{claim} \label{claim:case2}        For every $i \in \{1, 2\}$, $y \in M_i$, the set $(X_i - M_i) \cup  \{y\}$ is not independent. 
    \end{claim}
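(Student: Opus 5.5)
The plan is to argue by contradiction and produce a colouring of $G_i$ with fewer than $\chi(G_i)$ colours. Fix $i\in\{1,2\}$ and $y\in M_i$, and suppose that $I:=(X_i-M_i)\cup\{y\}$ is independent. Then $X_i = (M_i-\{y\})\cup I$ is a partition of $X_i$. We will colour $G[M_i-\{y\}]$ economically using the minimality of the counterexample $G$, and give all of $I$ one new colour.

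The key step uses the choice of $M_i$. Since $M_i$ is an inclusion-wise minimal subset of $X_i$ such that $G[M_i]$ contains a strong odd $K_{t_i}$-immersion, the graph $H:=G[M_i-\{y\}]$ contains no strong odd $K_{t_i}$-immersion. Also $\alpha(H)\le\alpha(G)\le 2$, and $H$ has fewer vertices than $G$. Hence $H$ is not a counterexample to Theorem~\ref{thm:main} with parameter $t_i$, so
\[
\chi(H)\le \left\lceil \tfrac32(t_i-1)\right\rceil .
\]
From (\ref{eq:t1t2}) we know $t_i$ is odd, so $t_i-1$ is even and this bound equals $\tfrac32 t_i-\tfrac32$.

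Now colour $H$ with $\tfrac32 t_i-\tfrac32$ colours and give every vertex of $I$ one additional colour. This is a proper colouring of $G_i$, because $I$ is independent. It uses $\tfrac32 t_i-\tfrac12$ colours, which is strictly less than $\chi(G_i)=\tfrac32 t_i+\tfrac12$ as established after (\ref{eq:t1t2}). This contradiction proves the claim.

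The only step needing care is the degenerate case. If $t_i=1$, then $M_i$ is a single vertex, so $H$ is empty and $\chi(H)=0=\lceil 0\rceil$, and the argument goes through unchanged. In general, $M_i-\{y\}$ may be empty or small, and the minimality hypothesis should be applied to $H$ regardless of its size; the bound $\lceil\tfrac32(t_i-1)\rceil$ still holds trivially when $H$ is empty. I do not expect any real obstacle: the substance is simply that removing $y$ destroys the immersion by minimality of $M_i$, which costs one colour less than needed when $t_i$ is odd.
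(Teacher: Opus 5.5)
Your proof is correct and is essentially the paper's argument: colour $G[M_i-\{y\}]$ by minimality with at most $\lceil\tfrac32(t_i-1)\rceil$ colours and give the independent set $(X_i-M_i)\cup\{y\}$ one new colour. The only difference is cosmetic. You compare directly against $\chi(G_i)=\tfrac32 t_i+\tfrac12$, whereas the paper adds $\chi(G_{j})$ colours for the other side and contradicts $\chi(G)=\tfrac32(t-1)+1$; these are equivalent given the consequences of (\ref{eq:t1t2}).
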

    \begin{proofc}
        Suppose not, that is, suppose without loss of generality that there exists $y\in M_1$ such that $(X_1 - M_1) \cup  \{y\}$ is independent. Let $G_1'=G_1[M_1-y]$ and note that $G_1'$ has no  strong odd $K_{t_1}$-immersion so by minimality $\chi(G_1')\leq \lceil \frac{3}{2}(t_1-1)\rceil$. We can extend this to a coloring of $G$ by assigning one new color to the independent set $(X_1 - M_1) \cup  \{y\}$ and using $\chi(G_2)$ new colors on the rest of the vertices. So
    \begin{align} \label{ineq:1}
        \chi(G) = \chi(G_1')+1+\chi(G_2)
        \leq \lceil \tfrac{3}{2}(t_1-1)\rceil+ 1+ \lceil \tfrac{3}{2}(t_2)\rceil 
    \end{align}
Since $t_1$ and $t_2$ are both odd and $t_1+t_2 = t-1$,  inequality \ref{ineq:1} implies the following:
\begin{align*} 
        \chi(G) &\leq  \tfrac{3}{2}(t_1-1)+ 1+  \tfrac{3}{2}(t_2) + \tfrac{1}{2} = \tfrac{3}{2}(t_1+t_2) =\tfrac{3}{2}(t-1) <\chi(G),
    \end{align*}
contradiction.
\end{proofc}

 Let $Z_i = X_i-M_i$ for $i\in\{1,2\}$. Recall that for $i\in\{1,2\}$, $G[M_i]$ contains $K_{t_i}$ as a strong odd immersion; fix one such immersion and partition the set $M_i$ into two sets $T_i, A_i$ where $T_i$ are the terminals and $A_i$ are the non-terminals (note that $A_i, T_i$ are disjoint since the immersion is strong). We can observe that $T_1\cup T_2$ are the terminals of a strong odd $K_{t-1}$-immersion in $G$. 
For $i\in\{1,2\}$, we additionally partition $A_i$ into two (possibly empty) sets $B_i, C_i$ so that vertices in $B_i$ have no neighbors in $Z_i$, and $C_i=A_i-B_i$. 

\begin{claim}\label{claim:Ztwo} $|Z_1|, |Z_2|\geq 2$.
\end{claim}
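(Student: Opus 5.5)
The plan is to use Claim~\ref{claim:case2} to show that $|Z_i|\le 1$ forces either an absurdity or a larger strong odd clique immersion inside $G_i$, contradicting the maximality of $t_i$. By symmetry it suffices to treat $i=1$.

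First I would check that $M_1\neq\emptyset$. Since $t_1$ is odd, we have $t_1\geq 1$, so $G[M_1]$ contains $K_{t_1}$ with at least one terminal. Hence $M_1$ is non-empty.

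\emph{Case $|Z_1|=0$.} Pick any $y\in M_1$. Then $(X_1-M_1)\cup\{y\}=\{y\}$, and a single vertex is an independent set. This directly contradicts Claim~\ref{claim:case2}.

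\emph{Case $Z_1=\{z\}$.} Here Claim~\ref{claim:case2} says that for every $y\in M_1$ the set $\{z,y\}$ is not independent, so $z$ is adjacent to every vertex of $M_1$. In particular, $z$ is adjacent to every terminal in $T_1$. I would then take the fixed strong odd $K_{t_1}$-immersion in $G[M_1]$ and add $z$ as a new terminal, using each single edge $zx$ with $x\in T_1$ as the new path.
\begin{itemize}
\item These new paths have length $1$, which is odd.
\item They are pairwise edge-disjoint, and they are edge-disjoint from the old paths, since all old paths lie in $G[M_1]$ and $z\notin M_1$.
\item The new paths have no interior vertices. The old paths avoid $z$ entirely, so no terminal (old or new) is interior to any path.
\end{itemize}
This gives a strong odd $K_{t_1+1}$-immersion in $G_1$, contradicting the choice of $t_1$ as the largest integer for which $G_1$ has such an immersion. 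Therefore $|Z_1|\geq 2$, and the same argument gives $|Z_2|\geq 2$.

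I expect no step to be a serious obstacle. The only point needing care is verifying that the extended immersion is still strong and totally odd, and this follows because $z$ lies outside $M_1$.
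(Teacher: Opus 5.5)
Your proof is correct and follows the paper's argument. You use the same case split on $|Z_i|\in\{0,1\}$, with Claim~\ref{claim:case2} forcing the lone vertex of $Z_i$ to be complete to $M_i$. The only difference is the final contradiction. You extend the immersion inside $G_1$ and contradict the maximality of $t_1$, and you also check explicitly that $M_1\neq\emptyset$. The paper instead adds that vertex to $T_1\cup T_2$ to obtain a strong odd $K_{t_1+t_2+1}=K_t$-immersion in $G$, which relies on $t_1+t_2=t-1$. Your version is slightly more self-contained, but both work.
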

\begin{proofc} There must be at least one element in each of $Z_1, Z_2$, otherwise $X_i-M_i=\emptyset$ for some $i$ and we get a contradiction to Claim \ref{claim:case2}. If $Z_i=\{v\}$, then by Claim \ref{claim:case2} the vertex  $v$ must be adjacent to every vertex in $M_i$, and in particular $M_1\cup M_2 \cup \{v\}$ contains a strong odd $K_{t_1+t_2+1}$-immersion, which is a contradiction since $t_1+t_2+1=t$.
\end{proofc}

We will now work to build a set of edge-disjoint odd-length paths $\mathcal{P}_v$ from some vertex $v\in Z_1\cup Z_2$ to distinct vertices in $T_1\cup T_2$ which are internally vertex-disjoint from $T_1\cup T_2$, and edge-disjoint from $G_1[M_1], G_2[M_2]$ and $G[T_1, T_2]$. If we can build such a set $\mathcal{P}_v$ with $|\mathcal{P}|=|T_1\cup T_2|$ then this would give us a strong odd $K_t$-immersion in $G$, and hence our desired contradiction. We initialize $\mathcal{P}_v=\emptyset$ and say that a path $P$ is \emph{acceptable (for $\mathcal{P}_v$)} if we can add $P$ to $\mathcal{P}_v$ while maintaining all the desired properties for $\mathcal{P}_v$. 
We say that we have \emph{solved} edge $vw$ for $\mathcal{P}_v$ if we have added a path to $\mathcal{P}_v$ that starts with the edge $vw$. Note that we get our desired result if we are able to solve at least $|T_1\cup T_2|=t-1$ edges incident to some $v$.

Given any $v\in Z_i$ ($i=1, 2$), we immediately add to $\mathcal{P}_v$ any edge from $v$ to $T_1\cup T_2$; let $T^*\subseteq T_1\cup T_2$ consist of every vertex $t\in T_i$ for which we have not yet been able to define an acceptable path from $v$ to $t$ to add to $\mathcal{P}_v$. Observe that $T^*\subseteq T_i$ for any $v\in Z_i$, $i=1, 2$.

\begin{claim} \label{claim:cliques1} $B_1,B_2\neq \emptyset$, and $G[B_1], G[B_2], G[Z_1], G[Z_2]$ are all cliques. 
\end{claim}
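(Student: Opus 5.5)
The plan is to prove $B_1\neq\emptyset$ first, since once it holds the clique statements follow almost immediately from $\alpha(G)\le 2$. By symmetry we work with $i=1$ throughout.

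\textbf{The cliques, given $B_1\neq\emptyset$.} Suppose $b,b'\in B_1$ are non-adjacent. By Claim~\ref{claim:Ztwo} we can pick $z\in Z_1$. Since $\alpha(G)\le 2$, the set $\{b,b',z\}$ is not independent, so $z$ is adjacent to $b$ or to $b'$. This contradicts the definition of $B_1$, whose vertices have no neighbours in $Z_1$. Hence $G[B_1]$ is a clique. Symmetrically, suppose $z,z'\in Z_1$ are non-adjacent and pick $b\in B_1$. Then $\{z,z',b\}$ being non-independent forces $b$ to have a neighbour in $Z_1$, again a contradiction. So $G[Z_1]$ is a clique. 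Both parts are routine.

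\textbf{Showing $B_1\neq\emptyset$.} This is the real content. Suppose $B_1=\emptyset$, so every vertex of $A_1$ has a neighbour in $Z_1$. We aim to exhibit some $v\in Z_1$ and solve $t-1$ of its edges, which yields a strong odd $K_t$-immersion.
\begin{enumerate}
\item Fix $v\in Z_1$. Every edge from $v$ to $T_1\cup T_2$ is solved immediately. Since $X_1$ and $X_2$ are completely joined, $v$ is adjacent to all of $T_2$, so $T^*\subseteq T_1$ and $|T^*|\le t_1$.
\item For each $\tau\in T^*$ we need an odd path $v\to\tau$ avoiding the forbidden edge sets. The natural template is a path of length $3$, namely $v$–$y$–$w$–$\tau$, with $y\in X_2\setminus T_2=Z_2\cup A_2$ and $w\in Z_1$ a neighbour of $\tau$. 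The edges $vy$ and $yw$ are cross edges not in $G[T_1,T_2]$. The edge $w\tau$ is not in $G_1[M_1]$ because $w\notin M_1$.
\item To find such a $w$, use $\alpha\le 2$: if $\tau$ is non-adjacent to $v$ and $w\in Z_1$ is non-adjacent to $v$, then $\{v,w,\tau\}$ forces $w\tau\in E(G)$. Alternatively, when $B_1=\emptyset$, a vertex $a\in A_1\cap N(\tau)$ has a neighbour $w\in Z_1$, which gives longer odd routes such as $v$–$y$–$w$–$a$–$y'$–$\ldots$ that use $A_1$ vertices. These routes must avoid edges of $G_1[M_1]$, which needs care.
\item To keep the paths edge-disjoint, use distinct middle vertices $y$ for distinct $\tau$.
\end{enumerate}

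\textbf{Main obstacle.} The difficulty is the counting in step 4. We have $|X_2\setminus T_2|\ge \chi(G_2)-t_2=(t_2+1)/2$, which need not be at least $t_1$. So the plan must supplement these paths with paths routed through $Z_1$ itself and through neighbours of $v$ inside $X_1$.
\begin{itemize}
\item The first thing to try is the degree bound $\deg(v)\ge k-1=\tfrac32(t-1)$. Subtracting the $|X_2|$ and $|N(v)\cap T_1|$ edges already used should leave at least $|T^*|$ further neighbours of $v$ in $Z_1\cup A_1$.
\item Each such neighbour $x$ should then be paired with a distinct $\tau\in T^*$ via an odd detour, for instance $v$–$x$–$y$–$\tau$ with $y\in X_2$ and $y\tau$ a cross edge not in $G[T_1,T_2]$, so $y\in X_2\setminus T_2$.
\item The $\alpha\le2$ property and minimality of $M_1$ should then be used to guarantee enough distinct endpoints.
\end{itemize}
If this counting fails, the fallback is a recoloring argument in the spirit of Claim~\ref{claim:case2}. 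With $B_1=\emptyset$, one would try to show that some $y\in M_1$ can be coloured together with an independent subset of $Z_1$, saving a colour and contradicting $\chi(G)=\tfrac32(t-1)+1$.
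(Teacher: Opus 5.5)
Your deduction of the clique statements from $B_1\neq\emptyset$ is correct. Your $Z_1$ argument (a non-adjacent pair in $Z_1$ together with any $b\in B_1$ is independent) is also shorter than the paper's, which proves that $G[Z_1],G[Z_2]$ are cliques first by a path-routing argument.

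But there is a genuine gap: the part you yourself call the real content, $B_1\neq\emptyset$, is not proved. Steps 2--4 and the ``main obstacle'' paragraph list things to try and end with an undeveloped recolouring fallback. Since your clique statements are derived from $B_1\neq\emptyset$, nothing in the claim is actually established.

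The concrete problem is your choice $v\in Z_1$, on the same side as the supposedly empty $B_1$. Then $T^*\subseteq T_1$, and the edges you cannot solve are those from $v$ to $B_2$, i.e.\ to vertices of $A_2$ with no neighbour in $Z_2$. A route $v\,y\,w\,\tau$ with $y\in B_2$ needs some $w\in Z_1\setminus\{v\}$ adjacent to $\tau$. You only produce such a $w$ when $v$ has a non-neighbour in $Z_1$, which is exactly what fails when $G[Z_1]$ is a clique. The hypothesis $B_1=\emptyset$ is never brought to bear on these edges. Since $|B_2|$ is uncontrolled, the bound $d(v)\ge\tfrac32(t-1)$ cannot absorb them; this is precisely the $-|B_2|$ loss in Claim~\ref{claim:Pvsize}. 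Your detour $v\,x\,y\,\tau$ for $x\in N(v)\cap(A_1\cup Z_1)$ is fine, even with a single fixed $y\in Z_2$; it is the paper's path $(v,s,z,t)$.

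The missing idea is to place $v$ on the side \emph{opposite} the empty $B$. If $B_1=\emptyset$, take $v\in Z_2$, so that $T^*\subseteq T_2$ and $Z_1$ is complete to $T^*$, and fix $z\in Z_1$. Then:
\begin{itemize}
\item each $s\in N(v)\cap(A_2\cup Z_2)$ gives the path $(v,s,z,\tau)$;
\item each $a\in A_1=C_1$ has a neighbour $z_a\in Z_1$ and gives $(v,a,z_a,\tau)$;
\item each $u'\in Z_1\setminus\{z\}$ gives $(v,u',z,\tau)$, provided $u'z\in E(G)$.
\end{itemize}

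The paper secures this last adjacency by proving that the $Z_i$ are cliques \emph{before} proving $B_i\neq\emptyset$. If $v,w\in Z_1$ are non-adjacent, then $w$ is complete to $T^*$ by $\alpha\le 2$. The paths $(v,u,w,\tau)$ for $u\in A_2\cup Z_2$ and $(v,s,z,\tau)$ for $s\in N(v)\cap(A_1\cup Z_1)$ then solve every edge at $v$.

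With your ordering you would have to insert that argument first. Alternatively, you could route the non-neighbours of $z$ in $Z_1$ (which form a clique since $\alpha\le 2$) through a second hub chosen among them. Either way at most two edges at $v$ remain unsolved. Since $\delta(G)\ge t+1$, we have $d(v)-2\ge t-1$, which yields a strong odd $K_t$-immersion and the required contradiction.
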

\begin{proofc}
We denote $v\sim u$ (or $u\sim v$) if $vu\in E(G)$. Suppose first that $G[Z_1], G[Z_2]$ are not cliques; suppose without loss that $ v,w \in Z_1$ and $v,w$ are not adjacent. Observe that since $\alpha=2$, $w\sim t$ for every $t\in T^*$. So, for any vertex $u\in A_2\cup Z_2$, since $u\sim v,w$, we get an odd path $(v, u, w, t)$ for any $t\in T^*$ (see the top left picture in Figure \ref{fig:claim3cliques}). So we know that $|A_2\cup Z_2|<|T^*|$, otherwise we get an strong odd $K_{t}$-immersion in $G$.

We have just said that we can solve every edge from $v$ to $A_2\cup Z_2$; update $\mathcal{P}_v$ and $ T^*$ accordingly. Let us now show that we can also solve every edge from $v$ to $A_1\cup Z_1\setminus\{v\}$. To this end, fix any $z\in Z_2$. Then for any vertex $s\in A_1\cup Z_1\setminus \{v\}$, since $s\sim v,z$, and since $z\sim t$ for all $t\in T^*$, we get an odd path $(v, s, z, t)$ for any $t\in T^*$. See the top right picture in Figure \ref{fig:claim3cliques}.

We have now solved every edge incident to $v$. Since $\delta(G)\geq t+1$, this is a contradiction. So, $G[Z_1], G[Z_2]$ are cliques.

Suppose now that one of $B_1, B_2$ is empty; say without loss that $B_2=\emptyset$.
Fix a vertex $v\in Z_1$. We will show that we can solve every edge incident to $v$ except for $vz$, which will yield our desired contradiction. For a vertex $s\in N(v)$ such that $s\in A_1\cup Z_1$ , since $s\sim v$, $s\sim z$, and since $z\sim t$ for all $t\in T^*$, we get an odd path $(v, s, z, t)$ for any $t\in T^*$. See the bottom-left picture in Figure \ref{fig:claim3cliques}.

We have just said that we can solve every edge from $v$ to $A_1\cup Z_1\setminus \{v\}$; update $\mathcal{P}_v$ and $T^*$ accordingly. Let us now show that we can also solve every edge from $v$ to $A_2\cup Z_2$. First suppose that $v\sim u$ for some $u\in A_2$. Since $G[A_{2}] = G[C_2]$, every vertex in $u\in A_2$ has a neighbor in $z_u\in Z_2$, which is in turn adjacent to every vertex in $T^*$. So we get an odd path $(v, u, z_u, t)$ for any $t\in T^*$. On the other hand, if $v\sim u'$ for some $u'\in Z_2\setminus \{z\}$, then since $G[Z_2]$ is a clique, $u'\sim z$, and $z$ is adjacent to every vertex in $T^*$. So we get an odd path $(v, u', z, t)$ for any $t\in T^*$, as we claimed. See the bottom-right picture in Figure \ref{fig:claim3cliques}.

We now know that $B_i, B_j\neq \emptyset$. For $i\in \{1, 2\}$ if $|B_i|=1$ then $G[B_i]$ is trivially a clique. Otherwise, if $G[B_i]$ is not a clique, then there are two non-adjacent vertices in $B_i$ and since they are not adjacent to any vertices in $Z_i\neq\emptyset$, we get an independent set of size 3, contradiction. So $G[B_1], G[B_2]$ are cliques.\end{proofc}

\begin{figure}[h!]
    \centering
    \makebox[\textwidth][c]{%
    \resizebox{1\linewidth}{!}{%
    \begin{tabular}{c c}
        \tikzset{every picture/.style={line width=0.75pt}} 

\begin{tikzpicture}[x=0.75pt,y=0.75pt,yscale=-1,xscale=1]

\draw   (61,152.83) .. controls (61,96.78) and (90.03,51.33) .. (125.83,51.33) .. controls (161.64,51.33) and (190.66,96.78) .. (190.66,152.83) .. controls (190.66,208.89) and (161.64,254.33) .. (125.83,254.33) .. controls (90.03,254.33) and (61,208.89) .. (61,152.83) -- cycle ;
\draw    (63.01,130.53) -- (190,131) ;
\draw    (61,152.83) -- (189.78,152.28) ;
\draw   (248,145.83) .. controls (248,89.78) and (277.03,44.33) .. (312.83,44.33) .. controls (348.64,44.33) and (377.66,89.78) .. (377.66,145.83) .. controls (377.66,201.89) and (348.64,247.33) .. (312.83,247.33) .. controls (277.03,247.33) and (248,201.89) .. (248,145.83) -- cycle ;
\draw    (255,100.33) -- (366.66,100.1) ;
\draw    (248.67,139.95) -- (377.67,139.95) ;
\draw  [fill={rgb, 255:red, 74; green, 99; blue, 226 }  ,fill opacity=1 ] (158,175.04) .. controls (158,172.62) and (159.96,170.66) .. (162.39,170.66) .. controls (164.81,170.66) and (166.78,172.62) .. (166.78,175.04) .. controls (166.78,177.47) and (164.81,179.43) .. (162.39,179.43) .. controls (159.96,179.43) and (158,177.47) .. (158,175.04) -- cycle ;
\draw    (125.83,51.33) -- (126.5,130.77) ;
\draw  [fill={rgb, 255:red, 0; green, 0; blue, 0 }  ,fill opacity=1 ] (123,219.04) .. controls (123,216.62) and (124.96,214.66) .. (127.39,214.66) .. controls (129.81,214.66) and (131.78,216.62) .. (131.78,219.04) .. controls (131.78,221.47) and (129.81,223.43) .. (127.39,223.43) .. controls (124.96,223.43) and (123,221.47) .. (123,219.04) -- cycle ;
\draw    (166.78,175.04) -- (276,154) ;
\draw  [fill={rgb, 255:red, 0; green, 0; blue, 0 }  ,fill opacity=1 ] (271.61,154) .. controls (271.61,151.58) and (273.58,149.61) .. (276,149.61) .. controls (278.42,149.61) and (280.39,151.58) .. (280.39,154) .. controls (280.39,156.42) and (278.42,158.39) .. (276,158.39) .. controls (273.58,158.39) and (271.61,156.42) .. (271.61,154) -- cycle ;
\draw    (127.39,219.04) -- (276,154) ;
\draw    (146,112) .. controls (142.01,123.53) and (123.01,197.53) .. (127.39,219.04) ;
\draw  [fill={rgb, 255:red, 0; green, 0; blue, 0 }  ,fill opacity=1 ] (146,112) .. controls (146,109.58) and (147.96,107.61) .. (150.39,107.61) .. controls (152.81,107.61) and (154.78,109.58) .. (154.78,112) .. controls (154.78,114.42) and (152.81,116.39) .. (150.39,116.39) .. controls (147.96,116.39) and (146,114.42) .. (146,112) -- cycle ;

\draw (36,70.4) node [anchor=north west][inner sep=0.75pt]    {$T_{1}$};
\draw (228,64.4) node [anchor=north west][inner sep=0.75pt]    {$T_{2}$};
\draw (30,130.4) node [anchor=north west][inner sep=0.75pt]    {\large$A_{1}$};
\draw (217,108.4) node [anchor=north west][inner sep=0.75pt]    {\large$A_{2}$};
\draw (33,182.4) node [anchor=north west][inner sep=0.75pt]    {\large$Z_{1}$};
\draw (222,201.4) node [anchor=north west][inner sep=0.75pt]    {\large$Z_{2}$};
\draw (146,167.4) node [anchor=north west][inner sep=0.75pt]    {\large$v$};

\draw (131,72.4) node [anchor=north west][inner sep=0.75pt]    {\large$T^*$};
\draw (104,208.4) node [anchor=north west][inner sep=0.75pt]    {\large$w$};
\draw (278,157.4) node [anchor=north west][inner sep=0.75pt]    {\large$u$};
\draw (158,104.4) node [anchor=north west][inner sep=0.75pt]    {\large$t$};

\end{tikzpicture} & \tikzset{every picture/.style={line width=0.75pt}} 

\begin{tikzpicture}[x=0.75pt,y=0.75pt,yscale=-1,xscale=1]

\draw   (61,152.83) .. controls (61,96.78) and (90.03,51.33) .. (125.83,51.33) .. controls (161.64,51.33) and (190.66,96.78) .. (190.66,152.83) .. controls (190.66,208.89) and (161.64,254.33) .. (125.83,254.33) .. controls (90.03,254.33) and (61,208.89) .. (61,152.83) -- cycle ;
\draw    (63.01,130.53) -- (190,131) ;
\draw    (61,152.83) -- (189.78,152.28) ;
\draw   (248,149) .. controls (248,94.69) and (277.03,50.66) .. (312.83,50.66) .. controls (348.64,50.66) and (377.66,94.69) .. (377.66,149) .. controls (377.66,203.31) and (348.64,247.33) .. (312.83,247.33) .. controls (277.03,247.33) and (248,203.31) .. (248,149) -- cycle ;
\draw    (256,100.33) -- (367.66,100.1) ;
\draw    (248.67,139.95) -- (377.67,139.95) ;
\draw  [fill={rgb, 255:red, 0; green, 75; blue, 245 }  ,fill opacity=1 ] (121,175.04) .. controls (121,172.62) and (122.96,170.66) .. (125.39,170.66) .. controls (127.81,170.66) and (129.78,172.62) .. (129.78,175.04) .. controls (129.78,177.47) and (127.81,179.43) .. (125.39,179.43) .. controls (122.96,179.43) and (121,177.47) .. (121,175.04) -- cycle ;
\draw    (125.83,51.33) -- (126.5,130.77) ;
\draw  [fill={rgb, 255:red, 0; green, 0; blue, 0 }  ,fill opacity=1 ] (121,215.04) .. controls (121,212.62) and (122.96,210.66) .. (125.39,210.66) .. controls (127.81,210.66) and (129.78,212.62) .. (129.78,215.04) .. controls (129.78,217.47) and (127.81,219.43) .. (125.39,219.43) .. controls (122.96,219.43) and (121,217.47) .. (121,215.04) -- cycle ;
\draw    (129.78,175.04) -- (158.66,204.47) ;
\draw  [fill={rgb, 255:red, 0; green, 0; blue, 0 }  ,fill opacity=1 ] (154.27,204.47) .. controls (154.27,202.05) and (156.23,200.08) .. (158.66,200.08) .. controls (161.08,200.08) and (163.04,202.05) .. (163.04,204.47) .. controls (163.04,206.9) and (161.08,208.86) .. (158.66,208.86) .. controls (156.23,208.86) and (154.27,206.9) .. (154.27,204.47) -- cycle ;
\draw    (163.04,204.47) -- (321.61,190) ;
\draw    (144.06,110.06) .. controls (159.43,134.18) and (285.04,161.08) .. (326,185.61) ;
\draw  [fill={rgb, 255:red, 0; green, 0; blue, 0 }  ,fill opacity=1 ] (139.67,105.67) .. controls (139.67,103.24) and (141.63,101.28) .. (144.06,101.28) .. controls (146.48,101.28) and (148.44,103.24) .. (148.44,105.67) .. controls (148.44,108.09) and (146.48,110.06) .. (144.06,110.06) .. controls (141.63,110.06) and (139.67,108.09) .. (139.67,105.67) -- cycle ;
\draw  [fill={rgb, 255:red, 0; green, 0; blue, 0 }  ,fill opacity=1 ] (321.61,190) .. controls (321.61,187.58) and (323.58,185.61) .. (326,185.61) .. controls (328.42,185.61) and (330.39,187.58) .. (330.39,190) .. controls (330.39,192.42) and (328.42,194.39) .. (326,194.39) .. controls (323.58,194.39) and (321.61,192.42) .. (321.61,190) -- cycle ;
\draw [line width=1.5]    (129.78,175.04) -- (175,141.72) ;
\draw  [fill={rgb, 255:red, 0; green, 0; blue, 0 }  ,fill opacity=1 ] (170.61,141.72) .. controls (170.61,139.29) and (172.57,137.33) .. (175,137.33) .. controls (177.42,137.33) and (179.39,139.29) .. (179.39,141.72) .. controls (179.39,144.14) and (177.42,146.11) .. (175,146.11) .. controls (172.57,146.11) and (170.61,144.14) .. (170.61,141.72) -- cycle ;
\draw [line width=1.5]    (326,190) -- (175,141.72) ;
\draw [line width=1.5]    (326,190) .. controls (332.27,176.77) and (165.66,95.77) .. (169.44,104.67) ;
\draw  [fill={rgb, 255:red, 0; green, 0; blue, 0 }  ,fill opacity=1 ] (160.67,104.67) .. controls (160.67,102.24) and (162.63,100.28) .. (165.06,100.28) .. controls (167.48,100.28) and (169.44,102.24) .. (169.44,104.67) .. controls (169.44,107.09) and (167.48,109.06) .. (165.06,109.06) .. controls (162.63,109.06) and (160.67,107.09) .. (160.67,104.67) -- cycle ;

\draw (36,70.4) node [anchor=north west][inner sep=0.75pt]    {\large$T_{1}$};
\draw (229,71.4) node [anchor=north west][inner sep=0.75pt]    {\large$T_{2}$};
\draw (31,123.4) node [anchor=north west][inner sep=0.75pt]    {\large$A_{1}$};
\draw (223,108.4) node [anchor=north west][inner sep=0.75pt]    {\large$A_{2}$};
\draw (34,174.4) node [anchor=north west][inner sep=0.75pt]    {\large$Z_{1}$};
\draw (226,168.4) node [anchor=north west][inner sep=0.75pt]    {\large$Z_{2}$};
\draw (98,168.4) node [anchor=north west][inner sep=0.75pt]    {\large$v$};
\draw (135,70.4) node [anchor=north west][inner sep=0.75pt]    {\large$T^{*}$};
\draw (94,205.4) node [anchor=north west][inner sep=0.75pt]    {\large$w$};
\draw (138,195.4) node [anchor=north west][inner sep=0.75pt]    {\large$s'$};
\draw (134.06,107.68) node [anchor=north west][inner sep=0.75pt]    {\large$t$};
\draw (328,193.4) node [anchor=north west][inner sep=0.75pt]    {\large$z$};
\draw (167.06,108.07) node [anchor=north west][inner sep=0.75pt]    {\large$t'$};
\draw (158,131.4) node [anchor=north west][inner sep=0.75pt]    {\large$s$};

\end{tikzpicture} \\
        &\\
        \tikzset{every picture/.style={line width=0.75pt}} 

\begin{tikzpicture}[x=0.75pt,y=0.75pt,yscale=-1,xscale=1]

\draw   (61,152.83) .. controls (61,96.78) and (90.03,51.33) .. (125.83,51.33) .. controls (161.64,51.33) and (190.66,96.78) .. (190.66,152.83) .. controls (190.66,208.89) and (161.64,254.33) .. (125.83,254.33) .. controls (90.03,254.33) and (61,208.89) .. (61,152.83) -- cycle ;
\draw    (63.01,130.53) -- (190,131) ;
\draw    (61,152.83) -- (189.78,152.28) ;
\draw   (248,149) .. controls (248,94.69) and (277.03,50.66) .. (312.83,50.66) .. controls (348.64,50.66) and (377.66,94.69) .. (377.66,149) .. controls (377.66,203.31) and (348.64,247.33) .. (312.83,247.33) .. controls (277.03,247.33) and (248,203.31) .. (248,149) -- cycle ;
\draw    (256,100.33) -- (367.66,100.1) ;
\draw    (248.67,139.95) -- (377.67,139.95) ;
\draw  [fill={rgb, 255:red, 0; green, 75; blue, 245 }  ,fill opacity=1 ] (158,175.04) .. controls (158,172.62) and (159.96,170.66) .. (162.39,170.66) .. controls (164.81,170.66) and (166.78,172.62) .. (166.78,175.04) .. controls (166.78,177.47) and (164.81,179.43) .. (162.39,179.43) .. controls (159.96,179.43) and (158,177.47) .. (158,175.04) -- cycle ;
\draw  [fill={rgb, 255:red, 0; green, 0; blue, 0 }  ,fill opacity=1 ] (321.61,190) .. controls (321.61,187.58) and (323.58,185.61) .. (326,185.61) .. controls (328.42,185.61) and (330.39,187.58) .. (330.39,190) .. controls (330.39,192.42) and (328.42,194.39) .. (326,194.39) .. controls (323.58,194.39) and (321.61,192.42) .. (321.61,190) -- cycle ;





\draw [line width=0.75]    (166.78,175.04) -- (303.61,214.72) ;
\draw  [fill={rgb, 255:red, 0; green, 0; blue, 0 }  ,fill opacity=1 ] (303.61,214.72) .. controls (303.61,212.29) and (305.57,210.33) .. (308,210.33) .. controls (310.42,210.33) and (312.39,212.29) .. (312.39,214.72) .. controls (312.39,217.14) and (310.42,219.11) .. (308,219.11) .. controls (305.57,219.11) and (303.61,217.14) .. (303.61,214.72) -- cycle ;
\draw [line width=0.75]    (312.39,214.72) -- (326,190) ;
\draw [line width=0.75]    (326,185.61) .. controls (357.75,149) and (147.66,59.77) .. (151.44,68.67) ;
\draw  [fill={rgb, 255:red, 0; green, 0; blue, 0 }  ,fill opacity=1 ] (142.67,68.67) .. controls (142.67,66.24) and (144.63,64.28) .. (147.06,64.28) .. controls (149.48,64.28) and (151.44,66.24) .. (151.44,68.67) .. controls (151.44,71.09) and (149.48,73.06) .. (147.06,73.06) .. controls (144.63,73.06) and (142.67,71.09) .. (142.67,68.67) -- cycle ;

\draw (36,70.4) node [anchor=north west][inner sep=0.75pt]    {\large$T_{1}$};
\draw (228,71.4) node [anchor=north west][inner sep=0.75pt]    {\large$T_{2}$};
\draw (31,123.4) node [anchor=north west][inner sep=0.75pt]    {\large$A_{1}$};
\draw (223,108.4) node [anchor=north west][inner sep=0.75pt]    {\large$A_{2}$};
\draw (34,174.4) node [anchor=north west][inner sep=0.75pt]    {\large$Z_{1}$};
\draw (226,168.4) node [anchor=north west][inner sep=0.75pt]    {\large$Z_{2}$};
\draw (146,167.4) node [anchor=north west][inner sep=0.75pt]    {\large$v$};

\draw (328,193.4) node [anchor=north west][inner sep=0.75pt]    {\large$z$};

\draw (312,215.4) node [anchor=north west][inner sep=0.75pt]    {\large$u'$};

\draw (131.06,68.46) node [anchor=north west][inner sep=0.75pt]    {\large$t$};

\end{tikzpicture} & \tikzset{every picture/.style={line width=0.75pt}} 

\begin{tikzpicture}[x=0.75pt,y=0.75pt,yscale=-1,xscale=1]

\draw   (61,152.83) .. controls (61,96.78) and (90.03,51.33) .. (125.83,51.33) .. controls (161.64,51.33) and (190.66,96.78) .. (190.66,152.83) .. controls (190.66,208.89) and (161.64,254.33) .. (125.83,254.33) .. controls (90.03,254.33) and (61,208.89) .. (61,152.83) -- cycle ;
\draw    (63.01,130.53) -- (190,131) ;
\draw    (61,152.83) -- (189.78,152.28) ;
\draw   (248,149) .. controls (248,94.69) and (277.03,50.66) .. (312.83,50.66) .. controls (348.64,50.66) and (377.66,94.69) .. (377.66,149) .. controls (377.66,203.31) and (348.64,247.33) .. (312.83,247.33) .. controls (277.03,247.33) and (248,203.31) .. (248,149) -- cycle ;
\draw    (256,100.33) -- (367.66,100.1) ;
\draw    (248.67,139.95) -- (377.67,139.95) ;
\draw  [fill={rgb, 255:red, 0; green, 75; blue, 245 }  ,fill opacity=1 ] (158,175.04) .. controls (158,172.62) and (159.96,170.66) .. (162.39,170.66) .. controls (164.81,170.66) and (166.78,172.62) .. (166.78,175.04) .. controls (166.78,177.47) and (164.81,179.43) .. (162.39,179.43) .. controls (159.96,179.43) and (158,177.47) .. (158,175.04) -- cycle ;
\draw  [fill={rgb, 255:red, 0; green, 0; blue, 0 }  ,fill opacity=1 ] (321.61,190) .. controls (321.61,187.58) and (323.58,185.61) .. (326,185.61) .. controls (328.42,185.61) and (330.39,187.58) .. (330.39,190) .. controls (330.39,192.42) and (328.42,194.39) .. (326,194.39) .. controls (323.58,194.39) and (321.61,192.42) .. (321.61,190) -- cycle ;
\draw [line width=1]    (166.78,175.04) -- (292.61,123.72) ;
\draw [line width=1]    (326,190) .. controls (317.23,177.45) and (139.66,78.77) .. (143.44,87.67) ;
\draw  [fill={rgb, 255:red, 0; green, 0; blue, 0 }  ,fill opacity=1 ] (134.67,87.67) .. controls (134.67,85.24) and (136.63,83.28) .. (139.06,83.28) .. controls (141.48,83.28) and (143.44,85.24) .. (143.44,87.67) .. controls (143.44,90.09) and (141.48,92.06) .. (139.06,92.06) .. controls (136.63,92.06) and (134.67,90.09) .. (134.67,87.67) -- cycle ;
\draw  [fill={rgb, 255:red, 0; green, 0; blue, 0 }  ,fill opacity=1 ] (292.61,123.72) .. controls (292.61,121.29) and (294.57,119.33) .. (297,119.33) .. controls (299.42,119.33) and (301.39,121.29) .. (301.39,123.72) .. controls (301.39,126.14) and (299.42,128.11) .. (297,128.11) .. controls (294.57,128.11) and (292.61,126.14) .. (292.61,123.72) -- cycle ;
\draw [line width=1]    (297,123.72) -- (326,190) ;

\draw (36,70.4) node [anchor=north west][inner sep=0.75pt]    {\large$T_{1}$};
\draw (228,71.4) node [anchor=north west][inner sep=0.75pt]    {\large$T_{2}$};
\draw (31,123.4) node [anchor=north west][inner sep=0.75pt]    {\large$A_{1}$};
\draw (223,108.4) node [anchor=north west][inner sep=0.75pt]    {\large$A_{2}$};
\draw (34,174.4) node [anchor=north west][inner sep=0.75pt]    {\large$Z_{1}$};
\draw (226,163.4) node [anchor=north west][inner sep=0.75pt]    {\large$Z_{2}$};
\draw (146,167.4) node [anchor=north west][inner sep=0.75pt]    {\large$v$};

\draw (290,105.4) node [anchor=north west][inner sep=0.75pt]    {\large$u$};

\draw (310,195.4) node [anchor=north west][inner sep=0.75pt]    {\large$z_u$};

\draw (141.06,95.46) node [anchor=north west][inner sep=0.75pt]    {\large$t$};

\end{tikzpicture} \\
           \end{tabular}}}
    \caption{Illustration of the paths from Claim \ref{claim:cliques1} 
    }
    \label{fig:claim3cliques}
\end{figure}

At this point consider the sets $\mathcal{P}_v, T^*$ again, noting that we have so far only added to $\mathcal{P}_v$ those paths which are single edges from $v$ to $T_1\cup T_2$. We consider all these paths to be \emph{type 1} paths. At this point we fix, for each $v\in Z_i$ $i\in\{1, 2\}$, some $z_v\in Z_j$ ($j\neq i$) and some $b_v\in B_j$. We now add to $\mathcal{P}_v$ all of the following acceptable paths, in the following order (see Figure \ref{fig:oddpaths}):
\begin{itemize}
\item {\bf Type 2 path:} $(v, v', b_v, t)$ where $t\in T_i^*$, $v'\in Z_i\cup C_i$
\item {\bf Type 3 path:} $(v, z', z_v, t)$ where $t\in T_i^*$, $z'\in Z_j\setminus \{z_v\}$
\item {\bf Type 4 path:} $(v, x, x', t)$ where $t\in T_i^*$, $x\in C_j$, $x'\in N_{Z_j}(x)$ 
\end{itemize}

\begin{figure}[h!]
    \centering
    \makebox[\textwidth][c]{%
    \resizebox{1\linewidth}{!}{%
    \begin{tabular}{c c}
        \tikzset{every picture/.style={line width=0.75pt}} 

\begin{tikzpicture}[x=0.75pt,y=0.75pt,yscale=-1,xscale=1,scale=1]
\useasboundingbox (20,-20) rectangle (400,270);

\draw   (61,152.83) .. controls (61,96.78) and (90.03,51.33) .. (125.83,51.33) .. controls (161.64,51.33) and (190.66,96.78) .. (190.66,152.83) .. controls (190.66,208.89) and (161.64,254.33) .. (125.83,254.33) .. controls (90.03,254.33) and (61,208.89) .. (61,152.83) -- cycle ;
\draw    (70,100.22) -- (181,100.22) ;
\draw    (61,140.33) -- (189.78,139.78) ;
\draw   (242,152.83) .. controls (242,96.78) and (271.03,51.33) .. (306.83,51.33) .. controls (342.64,51.33) and (371.66,96.78) .. (371.66,152.83) .. controls (371.66,208.89) and (342.64,254.33) .. (306.83,254.33) .. controls (271.03,254.33) and (242,208.89) .. (242,152.83) -- cycle ;
\draw    (251,100.33) -- (362.66,100.1) ;
\draw    (243,140.33) -- (369.66,140.1) ;
\draw  [fill={rgb, 255:red, 0; green, 0; blue, 0 }  ,fill opacity=1 ] (101,195.04) .. controls (101,192.62) and (102.96,190.66) .. (105.39,190.66) .. controls (107.81,190.66) and (109.78,192.62) .. (109.78,195.04) .. controls (109.78,197.47) and (107.81,199.43) .. (105.39,199.43) .. controls (102.96,199.43) and (101,197.47) .. (101,195.04) -- cycle ;
\draw  [fill={rgb, 255:red, 0; green, 0; blue, 0 }  ,fill opacity=1 ] (341,125.04) .. controls (341,122.62) and (342.96,120.66) .. (345.39,120.66) .. controls (347.81,120.66) and (349.78,122.62) .. (349.78,125.04) .. controls (349.78,127.47) and (347.81,129.43) .. (345.39,129.43) .. controls (342.96,129.43) and (341,127.47) .. (341,125.04) -- cycle ;
\draw  [fill={rgb, 255:red, 0; green, 0; blue, 0 }  ,fill opacity=1 ] (341,185.04) .. controls (341,182.62) and (342.96,180.66) .. (345.39,180.66) .. controls (347.81,180.66) and (349.78,182.62) .. (349.78,185.04) .. controls (349.78,187.47) and (347.81,189.43) .. (345.39,189.43) .. controls (342.96,189.43) and (341,187.47) .. (341,185.04) -- cycle ;
\draw    (300.22,99.89) -- (300.22,140.89) ;
\draw    (120.22,99.89) -- (120.22,140.89) ;
\draw  [fill={rgb, 255:red, 0; green, 0; blue, 0 }  ,fill opacity=1 ] (111,65.04) .. controls (111,62.62) and (112.96,60.66) .. (115.39,60.66) .. controls (117.81,60.66) and (119.78,62.62) .. (119.78,65.04) .. controls (119.78,67.47) and (117.81,69.43) .. (115.39,69.43) .. controls (112.96,69.43) and (111,67.47) .. (111,65.04) -- cycle ;
\draw  [fill={rgb, 255:red, 0; green, 0; blue, 0 }  ,fill opacity=1 ] (111,85.04) .. controls (111,82.62) and (112.96,80.66) .. (115.39,80.66) .. controls (117.81,80.66) and (119.78,82.62) .. (119.78,85.04) .. controls (119.78,87.47) and (117.81,89.43) .. (115.39,89.43) .. controls (112.96,89.43) and (111,87.47) .. (111,85.04) -- cycle ;
\draw [line width=1pt]   (111,85.04) .. controls (81,87.04) and (61.88,170.87) .. (105.39,195.04) ;
\draw [line width=1pt]   (111,65.04) .. controls (81,67.04) and (37.88,168.87) .. (105.39,195.04) ;

\draw  [fill={rgb, 255:red, 0; green, 0; blue, 0 }  ,fill opacity=1 ] (295,65.04) .. controls (295,62.62) and (296.96,60.66) .. (299.39,60.66) .. controls (301.81,60.66) and (303.78,62.62) .. (303.78,65.04) .. controls (303.78,67.47) and (301.81,69.43) .. (299.39,69.43) .. controls (296.96,69.43) and (295,67.47) .. (295,65.04) -- cycle ;

\draw  [fill={rgb, 255:red, 0; green, 0; blue, 0 }  ,fill opacity=1 ] (295,85.04) .. controls (295,82.62) and (296.96,80.66) .. (299.39,80.66) .. controls (301.81,80.66) and (303.78,82.62) .. (303.78,85.04) .. controls (303.78,87.47) and (301.81,89.43) .. (299.39,89.43) .. controls (296.96,89.43) and (295,87.47) .. (295,85.04) -- cycle ;

\draw [color=black,line width=1]   (105.39,195.04) .. controls (180,140) and (240,100) .. (299.39,85.04) ;

\draw [color=black,line width=1]   (105.39,195.04) .. controls (170,130) and (240,80) .. (299.39,65.04) ;

\draw (46,64.4) node [anchor=north west][inner sep=0.75pt]    {\large$T_{1}$};
\draw (227,64.4) node [anchor=north west][inner sep=0.75pt]    {\large$T_{2}$};
\draw (32,118) node [anchor=north west][inner sep=0.75pt]    {\large$A_{1}$};
\draw (217,118) node [anchor=north west][inner sep=0.75pt]    {\large$A_{2}$};
\draw (33,182.4) node [anchor=north west][inner sep=0.75pt]    {\large$Z_{1}$};
\draw (214,182.4) node [anchor=north west][inner sep=0.75pt]    {\large$Z_{2}$};
\draw (325,116.4) node [anchor=north west][inner sep=0.75pt]    {\large$b_v$};
\draw (325,175.4) node [anchor=north west][inner sep=0.75pt]    {\large$z_v$};
\draw (85,192) node [anchor=north west][inner sep=0.75pt]    {\large$v$};
\draw (100,102.4) node [anchor=north west][inner sep=0.75pt]    {\large$C_{1}$};
\draw (164,101.4) node [anchor=north west][inner sep=0.75pt]    {\large$B_{1}$};
\draw (280,103.4) node [anchor=north west][inner sep=0.75pt]    {\large$C_{2}$};
\draw (345,102.4) node [anchor=north west][inner sep=0.75pt]    {\large$B_{2}$};

\end{tikzpicture} & \tikzset{every picture/.style={line width=0.75pt}} 

\begin{tikzpicture}[x=0.75pt,y=0.75pt,yscale=-1,xscale=1,scale=1]
\useasboundingbox (20,-20) rectangle (400,270);

\draw   (61,152.83) .. controls (61,96.78) and (90.03,51.33) .. (125.83,51.33) .. controls (161.64,51.33) and (190.66,96.78) .. (190.66,152.83) .. controls (190.66,208.89) and (161.64,254.33) .. (125.83,254.33) .. controls (90.03,254.33) and (61,208.89) .. (61,152.83) -- cycle ;
\draw    (70,100.22) -- (181,100.22) ;
\draw    (61.83,141.17) -- (190.61,140.61) ;
\draw   (242,152.83) .. controls (242,96.78) and (271.03,51.33) .. (306.83,51.33) .. controls (342.64,51.33) and (371.66,96.78) .. (371.66,152.83) .. controls (371.66,208.89) and (342.64,254.33) .. (306.83,254.33) .. controls (271.03,254.33) and (242,208.89) .. (242,152.83) -- cycle ;
\draw    (251,100.33) -- (362.66,100.1) ;
\draw    (243,140.33) -- (369.66,140.1) ;
\draw  [fill={rgb, 255:red, 0; green, 0; blue, 0 }  ,fill opacity=1 ] (101,195.04) .. controls (101,192.62) and (102.96,190.66) .. (105.39,190.66) .. controls (107.81,190.66) and (109.78,192.62) .. (109.78,195.04) .. controls (109.78,197.47) and (107.81,199.43) .. (105.39,199.43) .. controls (102.96,199.43) and (101,197.47) .. (101,195.04) -- cycle ;
\draw  [fill={rgb, 255:red, 0; green, 0; blue, 0 }  ,fill opacity=1 ] (341,124.77) .. controls (341,122.5) and (342.84,120.66) .. (345.11,120.66) .. controls (347.38,120.66) and (349.22,122.5) .. (349.22,124.77) .. controls (349.22,127.04) and (347.38,128.88) .. (345.11,128.88) .. controls (342.84,128.88) and (341,127.04) .. (341,124.77) -- cycle ;
\draw  [fill={rgb, 255:red, 0; green, 0; blue, 0 }  ,fill opacity=1 ] (341,185.04) .. controls (341,182.62) and (342.96,180.66) .. (345.39,180.66) .. controls (347.81,180.66) and (349.78,182.62) .. (349.78,185.04) .. controls (349.78,187.47) and (347.81,189.43) .. (345.39,189.43) .. controls (342.96,189.43) and (341,187.47) .. (341,185.04) -- cycle ;
\draw    (300.22,99.89) -- (300.22,140.89) ;
\draw    (120.22,99.89) -- (120.22,140.89) ;
\draw [line width=1pt]   (109.78,195.04) -- (141.59,163.08) ;
\draw [line width=1pt]   (89,121.33) .. controls (70.78,133.33) and (72.78,165.33) .. (101,195.04) ;
\draw  [fill={rgb, 255:red, 0; green, 0; blue, 0 }  ,fill opacity=1 ] (89,121.33) .. controls (89,118.91) and (90.96,116.94) .. (93.39,116.94) .. controls (95.81,116.94) and (97.78,118.91) .. (97.78,121.33) .. controls (97.78,123.76) and (95.81,125.72) .. (93.39,125.72) .. controls (90.96,125.72) and (89,123.76) .. (89,121.33) -- cycle ;
\draw  [fill={rgb, 255:red, 0; green, 0; blue, 0 }  ,fill opacity=1 ] (137.2,163.08) .. controls (137.2,160.65) and (139.17,158.69) .. (141.59,158.69) .. controls (144.01,158.69) and (145.98,160.65) .. (145.98,163.08) .. controls (145.98,165.5) and (144.01,167.47) .. (141.59,167.47) .. controls (139.17,167.47) and (137.2,165.5) .. (137.2,163.08) -- cycle ;
\draw [line width=1pt]   (341,125.04) .. controls (322.78,137.04) and (237,208.67) .. (145.98,163.08) ;
\draw [line width=1pt]   (341,125.04) .. controls (184.61,108.06) and (188.07,115.57) .. (97.78,121.33) ;
\draw [line width=1pt]   (345.11,120.66) .. controls (297.72,-5.72) and (169.39,-4.33) .. (142.39,64.67) ;
\draw [line width=1pt]   (345.11,120.66) .. controls (263.72,15.28) and (173,57.28) .. (143,84.28) ;
\draw  [fill={rgb, 255:red, 0; green, 0; blue, 0 }  ,fill opacity=1 ] (138,64.67) .. controls (138,62.24) and (139.96,60.28) .. (142.39,60.28) .. controls (144.81,60.28) and (146.78,62.24) .. (146.78,64.67) .. controls (146.78,67.09) and (144.81,69.06) .. (142.39,69.06) .. controls (139.96,69.06) and (138,67.09) .. (138,64.67) -- cycle ;
\draw  [fill={rgb, 255:red, 0; green, 0; blue, 0 }  ,fill opacity=1 ] (138.61,84.28) .. controls (138.61,81.85) and (140.58,79.89) .. (143,79.89) .. controls (145.42,79.89) and (147.39,81.85) .. (147.39,84.28) .. controls (147.39,86.7) and (145.42,88.67) .. (143,88.67) .. controls (140.58,88.67) and (138.61,86.7) .. (138.61,84.28) -- cycle ;

\draw (46,64.4) node [anchor=north west][inner sep=0.75pt]    {\large$T_{1}$};
\draw (227,64.4) node [anchor=north west][inner sep=0.75pt]    {\large$T_{2}$};
\draw (32,118) node [anchor=north west][inner sep=0.75pt]    {\large$A_{1}$};
\draw (217,118) node [anchor=north west][inner sep=0.75pt]    {\large$A_{2}$};
\draw (33,182.4) node [anchor=north west][inner sep=0.75pt]    {\large$Z_{1}$};
\draw (214,182.4) node [anchor=north west][inner sep=0.75pt]    {\large$Z_{2}$};
\draw (350,120) node [anchor=north west][inner sep=0.75pt]    {\large$b_v$};
\draw (325,175.4) node [anchor=north west][inner sep=0.75pt]    {\large$z_v$};
\draw (86,190) node [anchor=north west][inner sep=0.75pt]    {\large$v$};
\draw (100,101.4) node [anchor=north west][inner sep=0.75pt]    {\large$C_{1}$};
\draw (160,100.4) node [anchor=north west][inner sep=0.75pt]    {\large$B_{1}$};
\draw (280,102.4) node [anchor=north west][inner sep=0.75pt]    {\large$C_{2}$};
\draw (344,101.4) node [anchor=north west][inner sep=0.75pt]    {\large$B_{2}$};

\end{tikzpicture} \\
        {\large Type 1 paths} & {\large Type 2 paths}\\
        \tikzset{every picture/.style={line width=0.75pt}} 

\begin{tikzpicture}[x=0.75pt,y=0.75pt,yscale=-1,xscale=1,scale=1]
\useasboundingbox (20,-20) rectangle (400,270);

\draw   (61,152.83) .. controls (61,96.78) and (90.03,51.33) .. (125.83,51.33) .. controls (161.64,51.33) and (190.66,96.78) .. (190.66,152.83) .. controls (190.66,208.89) and (161.64,254.33) .. (125.83,254.33) .. controls (90.03,254.33) and (61,208.89) .. (61,152.83) -- cycle ;
\draw    (70,100.22) -- (181,100.22) ;
\draw    (61,140.33) -- (189.78,139.78) ;
\draw   (242,152.83) .. controls (242,96.78) and (271.03,51.33) .. (306.83,51.33) .. controls (342.64,51.33) and (371.66,96.78) .. (371.66,152.83) .. controls (371.66,208.89) and (342.64,254.33) .. (306.83,254.33) .. controls (271.03,254.33) and (242,208.89) .. (242,152.83) -- cycle ;
\draw    (251,100.33) -- (362.66,100.1) ;
\draw    (243,140.33) -- (369.66,140.1) ;
\draw  [fill={rgb, 255:red, 0; green, 0; blue, 0 }  ,fill opacity=1 ] (101,195.04) .. controls (101,192.62) and (102.96,190.66) .. (105.39,190.66) .. controls (107.81,190.66) and (109.78,192.62) .. (109.78,195.04) .. controls (109.78,197.47) and (107.81,199.43) .. (105.39,199.43) .. controls (102.96,199.43) and (101,197.47) .. (101,195.04) -- cycle ;
\draw  [fill={rgb, 255:red, 0; green, 0; blue, 0 }  ,fill opacity=1 ] (341,124.77) .. controls (341,122.5) and (342.84,120.66) .. (345.11,120.66) .. controls (347.38,120.66) and (349.22,122.5) .. (349.22,124.77) .. controls (349.22,127.04) and (347.38,128.88) .. (345.11,128.88) .. controls (342.84,128.88) and (341,127.04) .. (341,124.77) -- cycle ;
\draw  [fill={rgb, 255:red, 0; green, 0; blue, 0 }  ,fill opacity=1 ] (341,185.04) .. controls (341,182.62) and (342.96,180.66) .. (345.39,180.66) .. controls (347.81,180.66) and (349.78,182.62) .. (349.78,185.04) .. controls (349.78,187.47) and (347.81,189.43) .. (345.39,189.43) .. controls (342.96,189.43) and (341,187.47) .. (341,185.04) -- cycle ;
\draw    (300.22,99.89) -- (300.22,140.89) ;
\draw    (120.22,99.89) -- (120.22,140.89) ;
\draw [line width=1pt]   (281,175.33) .. controls (247.44,160.83) and (132.58,167.96) .. (109.78,195.04) ;
\draw  [fill={rgb, 255:red, 0; green, 0; blue, 0 }  ,fill opacity=1 ] (281,175.33) .. controls (281,172.91) and (282.96,170.94) .. (285.39,170.94) .. controls (287.81,170.94) and (289.78,172.91) .. (289.78,175.33) .. controls (289.78,177.76) and (287.81,179.72) .. (285.39,179.72) .. controls (282.96,179.72) and (281,177.76) .. (281,175.33) -- cycle ;


\draw [line width=1pt]   (345.39,189.43) .. controls (292.22,201.44) and (329.22,179.44) .. (289.78,175.33) ;

\draw [line width=1pt]   (345.39,180.66) .. controls (344.12,111.34) and (155.21,70.94) .. (124,79.89) ;

\draw  [fill={rgb, 255:red, 0; green, 0; blue, 0 }  ,fill opacity=1 ] (119.61,84.28) .. controls (119.61,81.85) and (121.58,79.89) .. (124,79.89) .. controls (126.42,79.89) and (128.39,81.85) .. (128.39,84.28) .. controls (128.39,86.7) and (126.42,88.67) .. (124,88.67) .. controls (121.58,88.67) and (119.61,86.7) .. (119.61,84.28) -- cycle ;
\draw  [fill={rgb, 255:red, 0; green, 0; blue, 0 }  ,fill opacity=1 ] (280,193.33) .. controls (280,190.91) and (281.96,188.94) .. (284.39,188.94) .. controls (286.81,188.94) and (288.78,190.91) .. (288.78,193.33) .. controls (288.78,195.76) and (286.81,197.72) .. (284.39,197.72) .. controls (281.96,197.72) and (280,195.76) .. (280,193.33) -- cycle ;
\draw [line width=1pt]   (345.39,189.43) .. controls (326.37,215) and (311.37,197.23) .. (288.78,193.33) ;
\draw [line width=1pt]   (109.78,195.04) -- (280,193.33) ;
\draw [line width=1pt]   (345.39,180.66) .. controls (250.99,111.84) and (190.37,83.23) .. (155.6,94.33) ;
\draw  [fill={rgb, 255:red, 0; green, 0; blue, 0 }  ,fill opacity=1 ] (146.82,94.33) .. controls (146.82,91.91) and (148.79,89.94) .. (151.21,89.94) .. controls (153.63,89.94) and (155.6,91.91) .. (155.6,94.33) .. controls (155.6,96.76) and (153.63,98.72) .. (151.21,98.72) .. controls (148.79,98.72) and (146.82,96.76) .. (146.82,94.33) -- cycle ;

\draw (46,64.4) node [anchor=north west][inner sep=0.75pt]    {\large $T_{1}$};
\draw (227,64.4) node [anchor=north west][inner sep=0.75pt]    {\large$T_{2}$};
\draw (32,118) node [anchor=north west][inner sep=0.75pt]    {\large$A_{1}$};
\draw (217,118) node [anchor=north west][inner sep=0.75pt]    {\large$A_{2}$};
\draw (33,182.4) node [anchor=north west][inner sep=0.75pt]    {\large$Z_{1}$};
\draw (217,175.4) node [anchor=north west][inner sep=0.75pt]    {\large$Z_{2}$};
\draw (325,116.4) node [anchor=north west][inner sep=0.75pt]    {\large$b_v$};
\draw (325,175.4) node [anchor=north west][inner sep=0.75pt]    {\large$z_v$};
\draw (86,184.4) node [anchor=north west][inner sep=0.75pt]    {\large$v$};
\draw (100,103.4) node [anchor=north west][inner sep=0.75pt]    {\large$C_{1}$};
\draw (166,103.4) node [anchor=north west][inner sep=0.75pt]    {\large$B_{1}$};
\draw (280,103.4) node [anchor=north west][inner sep=0.75pt]    {\large$C_{2}$};
\draw (345,103.4) node [anchor=north west][inner sep=0.75pt]    {\large$B_{2}$};

\end{tikzpicture} & \tikzset{every picture/.style={line width=0.75pt}} 

\begin{tikzpicture}[x=0.75pt,y=0.75pt,yscale=-1,xscale=1,scale=1]
\useasboundingbox (20,-20) rectangle (400,270);

\draw   (61,152.83) .. controls (61,96.78) and (90.03,51.33) .. (125.83,51.33) .. controls (161.64,51.33) and (190.66,96.78) .. (190.66,152.83) .. controls (190.66,208.89) and (161.64,254.33) .. (125.83,254.33) .. controls (90.03,254.33) and (61,208.89) .. (61,152.83) -- cycle ;
\draw    (70,100.22) -- (181,100.22) ;
\draw    (61,140.33) -- (189.78,139.78) ;
\draw   (242,152.83) .. controls (242,96.78) and (271.03,51.33) .. (306.83,51.33) .. controls (342.64,51.33) and (371.66,96.78) .. (371.66,152.83) .. controls (371.66,208.89) and (342.64,254.33) .. (306.83,254.33) .. controls (271.03,254.33) and (242,208.89) .. (242,152.83) -- cycle ;
\draw    (251,100.33) -- (362.66,100.1) ;
\draw    (243,140.33) -- (369.66,140.1) ;
\draw  [fill={rgb, 255:red, 0; green, 0; blue, 0 }  ,fill opacity=1 ] (101,195.04) .. controls (101,192.62) and (102.96,190.66) .. (105.39,190.66) .. controls (107.81,190.66) and (109.78,192.62) .. (109.78,195.04) .. controls (109.78,197.47) and (107.81,199.43) .. (105.39,199.43) .. controls (102.96,199.43) and (101,197.47) .. (101,195.04) -- cycle ;
\draw  [fill={rgb, 255:red, 0; green, 0; blue, 0 }  ,fill opacity=1 ] (341,124.77) .. controls (341,122.5) and (342.84,120.66) .. (345.11,120.66) .. controls (347.38,120.66) and (349.22,122.5) .. (349.22,124.77) .. controls (349.22,127.04) and (347.38,128.88) .. (345.11,128.88) .. controls (342.84,128.88) and (341,127.04) .. (341,124.77) -- cycle ;
\draw  [fill={rgb, 255:red, 0; green, 0; blue, 0 }  ,fill opacity=1 ] (341,185.04) .. controls (341,182.62) and (342.96,180.66) .. (345.39,180.66) .. controls (347.81,180.66) and (349.78,182.62) .. (349.78,185.04) .. controls (349.78,187.47) and (347.81,189.43) .. (345.39,189.43) .. controls (342.96,189.43) and (341,187.47) .. (341,185.04) -- cycle ;
\draw    (300.22,99.89) -- (300.22,140.89) ;
\draw    (120.22,99.89) -- (120.22,140.89) ;

\draw  [fill={rgb, 255:red, 0; green, 0; blue, 0 }  ,fill opacity=1 ] (288.67,125.83) .. controls (288.67,123.4) and (290.63,121.44) .. (293.05,121.44) .. controls (295.48,121.44) and (297.44,123.4) .. (297.44,125.83) .. controls (297.44,128.25) and (295.48,130.22) .. (293.05,130.22) .. controls (290.63,130.22) and (288.67,128.25) .. (288.67,125.83) -- cycle ;



\draw [line width=1pt] (300.78,176.33) .. controls (379,81.33) and (190.37,83.23) .. (155.6,94.33) ;
\draw  [fill={rgb, 255:red, 0; green, 0; blue, 0 }  ,fill opacity=1 ] (146.82,94.33) .. controls (146.82,91.91) and (148.79,89.94) .. (151.21,89.94) .. controls (153.63,89.94) and (155.6,91.91) .. (155.6,94.33) .. controls (155.6,96.76) and (153.63,98.72) .. (151.21,98.72) .. controls (148.79,98.72) and (146.82,96.76) .. (146.82,94.33) -- cycle ;
\draw [line width=1pt]   (293.05,130.22) .. controls (282.05,150.22) and (138.33,212.89) .. (109.78,195.04) ;

\draw  [fill={rgb, 255:red, 0; green, 0; blue, 0 }  ,fill opacity=1 ] (292,176.33) .. controls (292,173.91) and (293.96,171.94) .. (296.39,171.94) .. controls (298.81,171.94) and (300.78,173.91) .. (300.78,176.33) .. controls (300.78,178.76) and (298.81,180.72) .. (296.39,180.72) .. controls (293.96,180.72) and (292,178.76) .. (292,176.33) -- cycle ;

\draw [line width=1pt]   (293.05,130.22) .. controls (270.06,151.93) and (277.06,169.93) .. (292,176.33) ;

\draw (46,64.4) node [anchor=north west][inner sep=0.75pt]    {\large$T_{1}$};
\draw (227,64.4) node [anchor=north west][inner sep=0.75pt]    {\large$T_{2}$};
\draw (32,118) node [anchor=north west][inner sep=0.75pt]    {\large$A_{1}$};
\draw (217,118) node [anchor=north west][inner sep=0.75pt]    {\large$A_{2}$};
\draw (33,182.4) node [anchor=north west][inner sep=0.75pt]    {\large$Z_{1}$};
\draw (217,175.4) node [anchor=north west][inner sep=0.75pt]    {\large$Z_{2}$};
\draw (327,116.4) node [anchor=north west][inner sep=0.75pt]    {\large$b_v$};
\draw (325,175.4) node [anchor=north west][inner sep=0.75pt]    {\large$z_v$};
\draw (86,184.4) node [anchor=north west][inner sep=0.75pt]    {\large$v$};
\draw (100,103.4) node [anchor=north west][inner sep=0.75pt]    {\large$C_{1}$};
\draw (166,103.4) node [anchor=north west][inner sep=0.75pt]    {\large$B_{1}$};
\draw (280,104.4) node [anchor=north west][inner sep=0.75pt]    {\large$C_{2}$};
\draw (346,103.4) node [anchor=north west][inner sep=0.75pt]    {\large$B_{2}$};

\end{tikzpicture}\\
        {\large Type 3 paths} & {\large Type 4 path}
    \end{tabular}}}
    \caption{Four types of odd paths in $\mathcal{P}_v$ that begin at $v\in Z_1$ and end in $T_1$.}
    \label{fig:oddpaths}
\end{figure}

After adding paths of types 1 through 4 to each $\mathcal{P}_v$, we get the following. We will use the notation that for some vertex $v$ and set $X\subseteq V(G)$, $d_X(v)$ is the number of neighbors of $v$ in $X$.

\begin{claim} \label{claim:Pvsize}Let $\{i, j\}=\{1, 2\}$. For any $v\in Z_i$, 
$$ t-1>|\mathcal{P}_v| \geq d(v)-|B_j|-1.$$ 
\end{claim}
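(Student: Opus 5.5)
The upper bound is immediate from the setup. Every path placed in $\mathcal{P}_v$ is an odd $v$--$t$ path for a distinct terminal $t\in T_1\cup T_2$. These paths are pairwise edge-disjoint, internally disjoint from $T_1\cup T_2$, and edge-disjoint from $G_1[M_1]$, $G_2[M_2]$ and $G[T_1,T_2]$. So if $|\mathcal{P}_v|\ge t-1=|T_1\cup T_2|$, then together with the fixed immersions in $G[M_1]$, $G[M_2]$ and the complete join $G[T_1,T_2]$ we would get a strong odd $K_t$-immersion with terminals $T_1\cup T_2\cup\{v\}$. That is a contradiction, so $|\mathcal{P}_v|<t-1$. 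In particular $T^*\neq\emptyset$ throughout the construction.

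For the lower bound, the plan is to show that every edge at $v$ is solved, except the edges to $B_j$ and the single edge $vz_v$. I would partition $N(v)$ using $X_j\subseteq N(v)$ and the fact that $v$ has no neighbours in $B_i$ (by definition of $B_i$, since $v\in Z_i$):
\[
N(v)=\bigl(N(v)\cap T_i\bigr)\cup T_j\cup\bigl(N(v)\cap (Z_i\cup C_i)\bigr)\cup \bigl(Z_j\setminus\{z_v\}\bigr)\cup C_j\cup B_j\cup\{z_v\}.
\]
Edges to $T_i\cup T_j$ are type 1. Edges to $Z_i\cup C_i$ are type 2, using that $v'b_v\in E(G)$ and $b_vt\in E(G)$ for $t\in T_i$ by the complete join between $X_i$ and $X_j$. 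Edges to $Z_j\setminus\{z_v\}$ are type 3, using that $G[Z_j]$ is a clique by Claim \ref{claim:cliques1}. Edges to $C_j$ are type 4, using that each $x\in C_j$ has a neighbour $x'\in Z_j$ by definition of $C_j$. Since $T^*\ne\emptyset$ at every stage, each of these edges can be assigned a fresh $t\in T^*$. Hence $|\mathcal{P}_v|\ge d(v)-|B_j|-1$.

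The main work, though routine, is checking that each added path is acceptable. For odd length and the terminal condition: every path of type 2--4 has length $3$ with internal vertices in $A_1\cup A_2\cup Z_1\cup Z_2$, so none is a terminal. The three vertices are distinct: $v'\ne b_v$ since $v'\in Z_i\cup C_i$ while $b_v\in B_j$; $z'\ne z_v$; and $x\in C_j$, $x'\in Z_j$. No edge used lies in $G[M_1]$, $G[M_2]$ or $G[T_1,T_2]$, because every path edge has an endpoint in $Z_1\cup Z_2$ or joins $X_i$ to $X_j$ with an endpoint in $A_j$ or $Z_j$.

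The main obstacle is mutual edge-disjointness, which I would check by edge class.
\begin{itemize}
\item First edges $vw$ are distinct because the $w$ are distinct.
\item Middle edges are $v'b_v$ (between $Z_i\cup C_i$ and $B_j$), $z'z_v$ (inside $Z_j$) and $xx'$ (between $C_j$ and $Z_j$). These lie in pairwise different classes, and within each class the first coordinate differs.
\item Last edges are $b_vt$, $z_vt$ and $x't$ with $t\in T_i$. They are distinct because each $t$ is used at most once, since it is removed from $T^*$ once used.
\end{itemize}
One coincidence needs attention: $x'$ could equal $z_v$, but the terminal still differs, so the edges differ. Likewise no middle edge coincides with a first edge, because middle edges avoid $v$. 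This completes the plan.
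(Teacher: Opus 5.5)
Your proposal is correct and follows the paper's proof. You use the same upper-bound argument (no strong odd $K_t$-immersion) and the same type 1--4 path count, in which every edge at $v$ is solved except those to $B_j$ and to $z_v$. You additionally spell out the acceptability and edge-disjointness checks that the paper leaves implicit.
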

\begin{proofc} Suppose without loss that that $i=1$ and let $v\in Z_1$. Note that within $\mathcal{P}_v$ we have $d_{T_1}(v)+|T_2|$ paths of type 1. We also have $d_{A_1}(v)+|Z_1|-1$ paths of type 2, since $G[Z_1]$ is a clique (by Claim \ref{claim:cliques1}) and hence $v$ has $|Z_1|-1$ neighbors in $Z_1$. In $\mathcal{P}_v$, we have $|Z_2|-1$ paths of type 3, since $G[Z_2]$ is a clique (by Claim \ref{claim:cliques1}) . And we have $|C_2|$ paths of type 4 in $\mathcal{P}_v$. So 
$$|\mathcal{P}_v|\geq d_{t_1}(v)+|T_2|+d_{A_1}(v)+|Z_1|-1+|Z_2|-1+|C_2|=d(v)-|B_2|-1.$$
The upper bound on $|\mathcal{P}_v|$ is immediate since $G$ has no strong odd $K_t$-immersion.\end{proofc}

We now have the following.

\begin{claim} \label{claim:NoSolved} For every $v\in Z_1\cup Z_2$, the set $\mathcal{P}_{v}$ is currently maximal (i.e. there are no acceptable paths that can be added to it).
\end{claim}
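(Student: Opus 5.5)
The plan is to argue by contradiction. Fix $v\in Z_i$, say $i=1$ and $j=2$, and suppose some acceptable path $P$ can still be added to $\mathcal{P}_v$.

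\textbf{Step 1: the first edge of $P$.} Every acceptable path starts with an edge at $v$ that has not yet been solved. From the count in the proof of Claim~\ref{claim:Pvsize}, the solved edges at $v$ are:
\begin{itemize}
\item all edges from $v$ to $T_1\cup T_2$;
\item all edges from $v$ to $A_1$ and to $Z_1\setminus\{v\}$;
\item all edges from $v$ to $Z_2\setminus\{z_v\}$;
\item all edges from $v$ to $C_2$.
\end{itemize}
Since $v$ is complete to $X_2$, the only unsolved edges at $v$ are $vz_v$ and the edges $vb$ with $b\in B_2$. These are exactly $|B_2|+1$ edges, which matches the gap between $d(v)-|B_2|-1$ and $d(v)$ in Claim~\ref{claim:Pvsize}. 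So $P$ starts either with $vz_v$ or with some $vb$, $b\in B_2$, and it ends at some $t\in T^*\subseteq T_1$.

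\textbf{Step 2: reduce the length of $P$.} Because the graph is so dense ($\alpha\le 2$, $Z_1,Z_2,B_1,B_2$ are cliques, and $X_1$ is complete to $X_2$), I would shortcut $P$ until it has length $3$, of the form $(v,y,y',t)$ with $y\in B_2\cup\{z_v\}$. This step has to respect two constraints: the path must stay edge-disjoint from the fixed immersions in $G[M_1]$ and $G[M_2]$, and it must be internally disjoint from $T_1\cup T_2$.

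\textbf{Step 3: show the added path forces a $K_t$-immersion.} Adding $P$ gives $|\mathcal{P}_v|\ge d(v)-|B_2|$. I would then combine this with $d(v)\ge \delta(G)\ge k-1=\tfrac32(t-1)$. The aim is to show that once one such path exists, the same construction, applied with a different choice of $b_v$ or $z_v$, solves every remaining edge $vb$. Then all $d(v)\ge t+1$ edges at $v$ are solved, and $G$ contains a strong odd $K_t$-immersion, which is a contradiction.

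\textbf{Step 4 (main obstacle): the edges into $B_2$.} The difficult case is $P=(v,b,y',t)$ with $b\in B_2$. Here $b$ has no neighbour in $Z_2$, so the natural choices for $y'$ lie in $X_1$: for example $y'\in Z_1\cup C_1$, or $y'=b_{v'}$-type vertices. The edge $by'$ may clash with a type~2 path, which uses the vertex $b_v$, or with the fixed immersion in $G[M_1]$. My first attempt here would be the following.
\begin{itemize}
\item Use the fact that $v$ is non-adjacent to every $t\in T^*$. Since $\alpha\le2$ and $Z_1$ is a clique, every other vertex of $Z_1$ is then adjacent to $t$.
\item Use the minimality of $M_2$ through Claim~\ref{claim:case2}, which should control how $B_2$ attaches to $Z_2$ and $C_2$.
\end{itemize}
Together these should let me route the paths through the edges $b\,w$ with $w\in Z_1\setminus\{v\}$, using each such edge at most once. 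If this rerouting fails, the fallback is a counting argument. From Claim~\ref{claim:Pvsize} and the degree bound we get $|B_2|>d(v)-t\ge \tfrac{t-3}{2}$, and I would try to show that a large clique $B_2$ together with the $Z_1$--$T^*$ adjacencies already yields the forbidden immersion.
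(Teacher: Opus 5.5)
There is a genuine gap: Steps 2--4 never produce the contradiction. Your main route is to shortcut $P$ to length $3$ and then reroute so that every edge $vb$, $b\in B_2$, gets solved. No mechanism is given for this. Shortcutting along chords can destroy oddness, and it can reuse edges already committed to $\mathcal{P}_v$ or to the fixed immersions.

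The one concrete inference you offer is also invalid. From $v\not\sim t$ and $\alpha\le 2$ you only learn that every \emph{non}-neighbour of $v$ is adjacent to $t$. Since $Z_1$ is a clique, every vertex of $Z_1\setminus\{v\}$ is a neighbour of $v$, so nothing follows about its adjacency to $t$. Your fallback count uses only one side. It gives $|B_2|>\tfrac{t-3}{2}$, or $|B_2|\ge\tfrac{t+1}{2}$ once the extra path is included, and that alone forces no $K_t$.

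The missing idea is to count on both sides at once; the shape of the extra path is irrelevant. Suppose $\mathcal{P}_{v_1}$ with $v_1\in Z_1$ admits one more acceptable path. Then Claim~\ref{claim:Pvsize} and the absence of a strong odd $K_t$-immersion give $t-2\ge|\mathcal{P}_{v_1}|\ge d(v_1)-|B_2|$. For any $v_2\in Z_2$, Claim~\ref{claim:Pvsize} gives $t-2\ge d(v_2)-|B_1|-1$. Adding these and using $\delta(G)\ge\tfrac32(t-1)$ gives $2t-4\ge 3(t-1)-|B_1|-|B_2|-1$, so $|B_1|+|B_2|\ge t$.

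Now $B_1$ and $B_2$ are cliques by Claim~\ref{claim:cliques1}, and $X_1$ is complete to $X_2$. So $B_1\cup B_2$ is a clique on at least $t$ vertices, which is itself a strong odd $K_t$-immersion, a contradiction. Your Step~1 identification of the unsolved edges ($vz_v$ and $vb$, $b\in B_2$) is correct but not needed for this argument.
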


\begin{proofc} Suppose not, and suppose without loss that $\mathcal{P}_{v_1}$ is not currently maximal for some $v_1\in Z_1$. Then we can add an additional acceptable path to $\mathcal{P}_{v_1}$, after which we have the following,
$$t-2\geq |\mathcal{P}_{v_1}| \geq d(v_1)-|B_j|$$ 
according to Claim \ref{claim:Pvsize} and the fact that $G$ has no strong odd $K_t$-immersion (recall $|T_1\cup T_2|=t-1$). Choosing any $v_2\in Z_2$, Claim \ref{claim:Pvsize} tells us that
$$t-2 \geq|\mathcal{P}_{v_2}| \geq d(v_2)-|B_j|-1.$$ 
Adding the two inequalities together and using the fact that $\delta(G)\geq \tfrac{3}{2}(t-1)$ we get:
$$2t-4 \geq 3(t-1)-|B_1|-|B_2|-1
\hspace*{.2in}\Rightarrow\hspace*{.2in} |B_1|+|B_2|\geq t.$$
However this is a contradiction, since $G[B_1\cup B_2]$ is a clique by Claim \ref{claim:cliques1}.
\end{proofc}

Up to this point we have needed to consider $\mathcal{P}_v$ for vertices $v$ in both $G_1$ and $G_2$; from here onwards we will be able to lose this symmetry and just focus on $G_1$.

\begin{figure}[h]
\centering
\begin{tabular}{cc}
  \resizebox{0.5\linewidth}{!}{\tikzset{every picture/.style={line width=0.75pt}}

\begin{tikzpicture}[x=0.75pt,y=0.75pt,yscale=-1,xscale=1]
\useasboundingbox (20,-20) rectangle (400,270);

\draw   (61,152.83) .. controls (61,96.78) and (90.03,51.33) .. (125.83,51.33) .. controls (161.64,51.33) and (190.66,96.78) .. (190.66,152.83) .. controls (190.66,208.89) and (161.64,254.33) .. (125.83,254.33) .. controls (90.03,254.33) and (61,208.89) .. (61,152.83) -- cycle ;
\draw    (70,100.22) -- (181,100.22) ;
\draw    (61,140.33) -- (189.78,139.78) ;
\draw   (242,152.83) .. controls (242,96.78) and (271.03,51.33) .. (306.83,51.33) .. controls (342.64,51.33) and (371.66,96.78) .. (371.66,152.83) .. controls (371.66,208.89) and (342.64,254.33) .. (306.83,254.33) .. controls (271.03,254.33) and (242,208.89) .. (242,152.83) -- cycle ;
\draw    (251,100.33) -- (362.66,100.1) ;
\draw    (243,140.33) -- (369.66,140.1) ;
\draw  [fill={rgb, 255:red, 0; green, 0; blue, 250 }  ,fill opacity=1 ] (131,185.04) .. controls (131,182.62) and (132.96,180.66) .. (135.39,180.66) .. controls (137.81,180.66) and (139.78,182.62) .. (139.78,185.04) .. controls (139.78,187.47) and (137.81,189.43) .. (135.39,189.43) .. controls (132.96,189.43) and (131,187.47) .. (131,185.04) -- cycle ;
\draw  [fill={rgb, 255:red, 0; green, 0; blue, 0 }  ,fill opacity=1 ] (341,125.04) .. controls (341,122.62) and (342.96,120.66) .. (345.39,120.66) .. controls (347.81,120.66) and (349.78,122.62) .. (349.78,125.04) .. controls (349.78,127.47) and (347.81,129.43) .. (345.39,129.43) .. controls (342.96,129.43) and (341,127.47) .. (341,125.04) -- cycle ;
\draw  [fill={rgb, 255:red, 0; green, 0; blue, 0 }  ,fill opacity=1 ] (341,185.04) .. controls (341,182.62) and (342.96,180.66) .. (345.39,180.66) .. controls (347.81,180.66) and (349.78,182.62) .. (349.78,185.04) .. controls (349.78,187.47) and (347.81,189.43) .. (345.39,189.43) .. controls (342.96,189.43) and (341,187.47) .. (341,185.04) -- cycle ;
\draw    (300.22,99.89) -- (300.22,140.89) ;
\draw    (120.22,99.89) -- (120.22,140.89) ;
\draw  [fill={rgb, 255:red, 0; green, 0; blue, 0 }  ,fill opacity=1 ] (96,124.04) .. controls (96,121.62) and (97.96,119.66) .. (100.39,119.66) .. controls (102.81,119.66) and (104.78,121.62) .. (104.78,124.04) .. controls (104.78,126.47) and (102.81,128.43) .. (100.39,128.43) .. controls (97.96,128.43) and (96,126.47) .. (96,124.04) -- cycle ;
\draw  [fill={rgb, 255:red, 0; green, 0; blue, 0 }  ,fill opacity=1 ] (91,185.04) .. controls (91,182.62) and (92.96,180.66) .. (95.39,180.66) .. controls (97.81,180.66) and (99.78,182.62) .. (99.78,185.04) .. controls (99.78,187.47) and (97.81,189.43) .. (95.39,189.43) .. controls (92.96,189.43) and (91,187.47) .. (91,185.04) -- cycle ;
\draw  [fill={rgb, 255:red, 0; green, 0; blue, 0 }  ,fill opacity=1 ] (282,186.04) .. controls (282,183.62) and (283.96,181.66) .. (286.39,181.66) .. controls (288.81,181.66) and (290.78,183.62) .. (290.78,186.04) .. controls (290.78,188.47) and (288.81,190.43) .. (286.39,190.43) .. controls (283.96,190.43) and (282,188.47) .. (282,186.04) -- cycle ;
\draw    (286.39,186.04) -- (345.39,185.04) ;
\draw    (131,185.04) -- (99.78,185.04) ;
\draw [line width=2pt]   (139.78,185.04) .. controls (179.78,155.04) and (295,144.61) .. (345.39,180.66) ;
\draw [line width=2pt]   (96,124.04) .. controls (56.76,139.69) and (72.76,166.69) .. (95.39,180.66) ;
\draw [line width=2pt]   (95.39,189.43) .. controls (198.67,270.33) and (261.67,257.33) .. (345.39,189.43) ;
\draw [line width=2pt]   (104.78,124.04) .. controls (153.61,98.72) and (245.61,150.72) .. (286.39,181.66) ;
\draw  [fill={rgb, 255:red, 0; green, 0; blue, 0 }  ,fill opacity=1 ] (131,75.04) .. controls (131,72.62) and (132.96,70.66) .. (135.39,70.66) .. controls (137.81,70.66) and (139.78,72.62) .. (139.78,75.04) .. controls (139.78,77.47) and (137.81,79.43) .. (135.39,79.43) .. controls (132.96,79.43) and (131,77.47) .. (131,75.04) -- cycle ;
\draw [line width=2pt]   (139.78,75.04) .. controls (179.78,45.04) and (274.61,113.72) .. (286.39,181.66) ;

\draw (46,64.4) node [anchor=north west][inner sep=0.75pt]    {\large $T_{1}$};
\draw (227,64.4) node [anchor=north west][inner sep=0.75pt]    {\large$T_{2}$};
\draw (32,112.4) node [anchor=north west][inner sep=0.75pt]    {\large$A_{1}$};
\draw (217,108.4) node [anchor=north west][inner sep=0.75pt]    {\large$A_{2}$};
\draw (33,182.4) node [anchor=north west][inner sep=0.75pt]    {\large$Z_{1}$};
\draw (214,182.4) node [anchor=north west][inner sep=0.75pt]    {\large$Z_{2}$};
\draw (320,116.4) node [anchor=north west][inner sep=0.75pt]    {$b_{v}$};
\draw (350,175.4) node [anchor=north west][inner sep=0.75pt]    {$z_{v}$};
\draw (133,190) node [anchor=north west][inner sep=0.75pt]    {$v$};
\draw (97,102.4) node [anchor=north west][inner sep=0.75pt]    {\large$C_{1}$};
\draw (164,102.4) node [anchor=north west][inner sep=0.75pt]    {\large$B_{1}$};
\draw (280,102.4) node [anchor=north west][inner sep=0.75pt]    {\large$C_{2}$};
\draw (344,102.4) node [anchor=north west][inner sep=0.75pt]    {\large$B_{2}$};
\draw (82,190) node [anchor=north west][inner sep=0.75pt]    {$w$};
\draw (266,176.4) node [anchor=north west][inner sep=0.75pt]    {$z_1$};

\end{tikzpicture}} &
  \resizebox{0.5\linewidth}{!}{\tikzset{every picture/.style={line width=0.75pt}}

\begin{tikzpicture}[x=0.75pt,y=0.75pt,yscale=-1,xscale=1,scale=1]
\useasboundingbox (20,-20) rectangle (400,270);

\draw   (61,152.83) .. controls (61,96.78) and (90.03,51.33) .. (125.83,51.33) .. controls (161.64,51.33) and (190.66,96.78) .. (190.66,152.83) .. controls (190.66,208.89) and (161.64,254.33) .. (125.83,254.33) .. controls (90.03,254.33) and (61,208.89) .. (61,152.83) -- cycle ;
\draw    (70,100.22) -- (181,100.22) ;
\draw    (61,140.33) -- (189.78,139.78) ;
\draw   (242,152.83) .. controls (242,96.78) and (271.03,51.33) .. (306.83,51.33) .. controls (342.64,51.33) and (371.66,96.78) .. (371.66,152.83) .. controls (371.66,208.89) and (342.64,254.33) .. (306.83,254.33) .. controls (271.03,254.33) and (242,208.89) .. (242,152.83) -- cycle ;
\draw    (251,100.33) -- (362.66,100.1) ;
\draw    (243,140.33) -- (369.66,140.1) ;
\draw  [fill={rgb, 255:red, 0; green, 0; blue, 250 }  ,fill opacity=1 ] (131,185.04) .. controls (131,182.62) and (132.96,180.66) .. (135.39,180.66) .. controls (137.81,180.66) and (139.78,182.62) .. (139.78,185.04) .. controls (139.78,187.47) and (137.81,189.43) .. (135.39,189.43) .. controls (132.96,189.43) and (131,187.47) .. (131,185.04) -- cycle ;
\draw  [fill={rgb, 255:red, 0; green, 0; blue, 0 }  ,fill opacity=1 ] (341,125.04) .. controls (341,122.62) and (342.96,120.66) .. (345.39,120.66) .. controls (347.81,120.66) and (349.78,122.62) .. (349.78,125.04) .. controls (349.78,127.47) and (347.81,129.43) .. (345.39,129.43) .. controls (342.96,129.43) and (341,127.47) .. (341,125.04) -- cycle ;
\draw  [fill={rgb, 255:red, 0; green, 0; blue, 0 }  ,fill opacity=1 ] (341,185.04) .. controls (341,182.62) and (342.96,180.66) .. (345.39,180.66) .. controls (347.81,180.66) and (349.78,182.62) .. (349.78,185.04) .. controls (349.78,187.47) and (347.81,189.43) .. (345.39,189.43) .. controls (342.96,189.43) and (341,187.47) .. (341,185.04) -- cycle ;
\draw    (300.22,99.89) -- (300.22,140.89) ;
\draw    (120.22,99.89) -- (120.22,140.89) ;
\draw  [fill={rgb, 255:red, 0; green, 0; blue, 0 }  ,fill opacity=1 ] (131,84.04) .. controls (131,81.62) and (132.96,79.66) .. (135.39,79.66) .. controls (137.81,79.66) and (139.78,81.62) .. (139.78,84.04) .. controls (139.78,86.47) and (137.81,88.43) .. (135.39,88.43) .. controls (132.96,88.43) and (131,86.47) .. (131,84.04) -- cycle ;
\draw  [fill={rgb, 255:red, 0; green, 0; blue, 0 }  ,fill opacity=1 ] (101,155.04) .. controls (101,152.62) and (102.96,150.66) .. (105.39,150.66) .. controls (107.81,150.66) and (109.78,152.62) .. (109.78,155.04) .. controls (109.78,157.47) and (107.81,159.43) .. (105.39,159.43) .. controls (102.96,159.43) and (101,157.47) .. (101,155.04) -- cycle ;
\draw  [fill={rgb, 255:red, 0; green, 0; blue, 0 }  ,fill opacity=1 ] (101,215.04) .. controls (101,212.62) and (102.96,210.66) .. (105.39,210.66) .. controls (107.81,210.66) and (109.78,212.62) .. (109.78,215.04) .. controls (109.78,217.47) and (107.81,219.43) .. (105.39,219.43) .. controls (102.96,219.43) and (101,217.47) .. (101,215.04) -- cycle ;
\draw  [fill={rgb, 255:red, 0; green, 0; blue, 0 }  ,fill opacity=1 ] (282,186.04) .. controls (282,183.62) and (283.96,181.66) .. (286.39,181.66) .. controls (288.81,181.66) and (290.78,183.62) .. (290.78,186.04) .. controls (290.78,188.47) and (288.81,190.43) .. (286.39,190.43) .. controls (283.96,190.43) and (282,188.47) .. (282,186.04) -- cycle ;
\draw    (286.39,186.04) -- (345.39,185.04) ;
\draw    (105.39,215.04) -- (135.39,185.04) ;
\draw    (135.39,185.04) -- (105.39,155.04) ;
\draw [line width=2pt]   (105.39,210.66) -- (105.39,159.43) ;
\draw [line width=2pt]   (139.78,185.04) .. controls (179.78,155.04) and (295,144.61) .. (345.39,180.66) ;
\draw [line width=2pt]   (109.78,215.04) .. controls (195.39,252) and (291,230.78) .. (345.39,189.43) ;
\draw [line width=2pt]   (105.39,150.66) .. controls (117.88,130.71) and (284.44,162.89) .. (286.39,181.66) ;
\draw [line width=2pt]   (139.78,84.04) .. controls (179.78,54.04) and (297.56,148.56) .. (286.39,181.66) ;

\draw (46,64.4) node [anchor=north west][inner sep=0.75pt]    {\large$T_{1}$};
\draw (227,64.4) node [anchor=north west][inner sep=0.75pt]    {\large$T_{2}$};
\draw (32,112.4) node [anchor=north west][inner sep=0.75pt]    {\large$A_{1}$};
\draw (217,108.4) node [anchor=north west][inner sep=0.75pt]    {\large$A_{2}$};
\draw (33,182.4) node [anchor=north west][inner sep=0.75pt]    {\large$Z_{1}$};
\draw (214,182.4) node [anchor=north west][inner sep=0.75pt]    {\large$Z_{2}$};
\draw (320,116.4) node [anchor=north west][inner sep=0.75pt]    {$b_{v}$};
\draw (350,175.4) node [anchor=north west][inner sep=0.75pt]    {$z_{v}$};
\draw (133,188.44) node [anchor=north west][inner sep=0.75pt]    {$v$};
\draw (100,102.4) node [anchor=north west][inner sep=0.75pt]    {\large$C_{1}$};
\draw (164,101.4) node [anchor=north west][inner sep=0.75pt]    {\large$B_{1}$};
\draw (280,103.4) node [anchor=north west][inner sep=0.75pt]    {\large$C_{2}$};
\draw (344,103.4) node [anchor=north west][inner sep=0.75pt]    {\large$B_{2}$};
\draw (112.78,150) node [anchor=north west][inner sep=0.75pt]    {$w'$};
\draw (103,218.44) node [anchor=north west][inner sep=0.75pt]    {$w$};
\draw (266,176.4) node [anchor=north west][inner sep=0.75pt]    {$z_1$};

\end{tikzpicture}} 
\end{tabular}
\caption{Paths from the proofs of Claims \ref{claim:NoNe1} and \ref{claim:size2}.}
\label{fig:claims789}
\end{figure}

\begin{claim}\label{claim:NoNe1} $A_1=B_1$. 
\end{claim}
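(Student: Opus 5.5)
The plan is to argue by contradiction using the maximality of the path systems from Claim~\ref{claim:NoSolved}. If $A_1\neq B_1$, then $C_1\neq\emptyset$, and I will exhibit an extra acceptable path for some $\mathcal{P}_v$ with $v\in Z_1$, which contradicts Claim~\ref{claim:NoSolved}. So suppose $c\in C_1$, and let $w\in Z_1$ be a neighbor of $c$, which exists by the definition of $C_1$.

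First I fix the relevant vertices.
\begin{itemize}
\item By Claim~\ref{claim:Ztwo}, $|Z_1|\ge 2$, so I can pick $v\in Z_1\setminus\{w\}$.
\item Let $z_v\in Z_2$ be the vertex fixed earlier for $v$. Again by Claim~\ref{claim:Ztwo}, I can pick $z_1\in Z_2\setminus\{z_v\}$.
\item By the upper bound in Claim~\ref{claim:Pvsize}, $|\mathcal{P}_v|<t-1$, so the current set $T^*\subseteq T_1$ for $v$ is nonempty. Choose $t\in T^*$.
\end{itemize}

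The candidate path is
$$P=(v,\,z_v,\,w,\,c,\,z_1,\,t).$$
Its edges exist for two reasons. Every vertex of $X_1$ is adjacent to every vertex of $X_2$, which gives $vz_v$, $z_vw$, $cz_1$ and $z_1t$. The edge $wc$ exists by the choice of $w$. The path has length $5$, which is odd, and its vertices are distinct because $v\ne w$ and $z_1\ne z_v$.

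Next I check that $P$ is acceptable.
\begin{itemize}
\item Its only terminal is its endpoint $t$. The internal vertices $z_v,w,c,z_1$ lie in $Z_2\cup Z_1\cup C_1$, so none of them is a terminal.
\item No edge of $P$ lies in $G_1[M_1]$, in $G_2[M_2]$, or in $G[T_1,T_2]$. Each edge either crosses between $X_1$ and $X_2$, or is $wc$ with $w\notin M_1$.
\end{itemize}

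The main step is checking edge-disjointness from the paths already in $\mathcal{P}_v$, type by type.
\begin{itemize}
\item Type~1 paths are single edges from $v$ into $T_1\cup T_2$.
\item Type~2 paths use edges $vv'$, $v'b_v$ and $b_vt'$ with $b_v\in B_2$. Here $vz_v$ is not of the form $vv'$ since $z_v\notin Z_1\cup C_1$, and $wz_v$, $cz_1$ are not of the form $v'b_v$ since $z_v,z_1\notin B_2$.
\item Type~3 paths use $vz'$ and $z'z_v$ for $z'\ne z_v$, together with $z_vt'$. So $vz_v$ is unused, and $wz_v$ is unused because $w\notin Z_2$.
\item Type~4 paths use $vx$, $xx'$ and $x't'$ with $x\in C_2$, $x'\in Z_2$.
\end{itemize}
The only edge needing care is $z_1t$, which could a priori be the last edge of a type~3 or type~4 path. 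However, every path added to $\mathcal{P}_v$ ends at a terminal that was then removed from $T^*$, and its last edge is incident to that terminal. Since $t$ is still in $T^*$, no path of $\mathcal{P}_v$ ends at $t$, and so no path uses an edge incident to $t$ at all; in particular $z_1t$ is unused.

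Hence $P$ is acceptable for $\mathcal{P}_v$, contradicting Claim~\ref{claim:NoSolved}. Therefore $C_1=\emptyset$, that is, $A_1=B_1$.
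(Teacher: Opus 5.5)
Your proof is correct and is the paper's argument: the paper also assumes an edge from some $a\in A_1$ to some $w\in Z_1$, uses the same odd path $(v,z_v,w,a,z_1,t)$ with $v\in Z_1\setminus\{w\}$ and $z_1\in Z_2\setminus\{z_v\}$, and obtains a contradiction with Claim~\ref{claim:NoSolved}. Your type-by-type edge-disjointness check, including the observation that no path of $\mathcal{P}_v$ uses an edge at $t\in T^*$, spells out what the paper leaves to a comparison with Figure~\ref{fig:oddpaths}.
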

\begin{proofc} Suppose there exists an edge $aw\in E(G)$ with $a\in A_1, w\in Z_1$. Choose any vertices $v\in Z_1\setminus \{w\}$ and $z_1\in Z_2\setminus z_v$; we know such vertices exist by Claim \ref{claim:Ztwo}. See the top-left picture in Figure \ref{fig:claims789}. Note that $z_1\sim v, w$ and $z\sim a, t$ for any $t\in T^*$. So $P= (v , z_v , w , a , z_1 ,t)$ is an odd path for any $t\in T^*$. By comparing $P$ to paths of types 1-4 in Figure \ref{fig:oddpaths} we can see that $P$ is acceptable for $\mathcal{P}_{v}$; this is a contradiction to Claim \ref{claim:NoSolved}.
\end{proofc}

\begin{claim} \label{claim:size2} $|Z_1|=2$. 
\end{claim}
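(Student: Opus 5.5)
We argue by contradiction, following the same pattern as the proof of Claim \ref{claim:NoNe1}: assuming $|Z_1|\geq 3$, we exhibit a new acceptable odd path for some $\mathcal{P}_v$ with $v\in Z_1$, contradicting Claim \ref{claim:NoSolved}. By Claim \ref{claim:Ztwo} we already know $|Z_1|\geq 2$, so only $|Z_1|\geq 3$ needs to be excluded.

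\textbf{Setup.} Suppose $|Z_1|\geq 3$, and pick three distinct vertices $v,w,w'\in Z_1$. By Claim \ref{claim:Ztwo}, $|Z_2|\geq 2$, so we can choose $z_1\in Z_2\setminus\{z_v\}$. Since $|T_1\cup T_2|=t-1$ and $|\mathcal{P}_v|<t-1$ (Claim \ref{claim:Pvsize}), the set $T^*$ is nonempty. Fix $t\in T^*$, and recall $T^*\subseteq T_1$.

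\textbf{The path.} Consider
$$P=(v,z_v,w,w',z_1,t).$$
It has length $5$, so it is odd. Each edge exists:
\begin{itemize}
\item $v z_v$, $z_v w$ and $w' z_1$ are edges because $G[X_1,X_2]$ is complete bipartite;
\item $w w'$ is an edge because $G[Z_1]$ is a clique (Claim \ref{claim:cliques1});
\item $z_1 t$ is an edge again by completeness of $G[X_1,X_2]$.
\end{itemize}
Its interior vertices $z_v,w,w',z_1$ lie in $Z_1\cup Z_2$, so $P$ avoids terminals internally. None of its edges lies in $G_1[M_1]$, $G_2[M_2]$ or $G[T_1,T_2]$.

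\textbf{Edge-disjointness from $\mathcal{P}_v$.} We check each edge of $P$ against the paths of types 1--4.
\begin{itemize}
\item \emph{Edge $vz_v$.} Type 3 paths start with $vz'$ where $z'\neq z_v$, and no other type starts with an edge into $Z_2$. So $vz_v$ is exactly the unsolved edge that produced the ``$-1$'' in Claim \ref{claim:Pvsize}, and it is unused.
\item \emph{Edge $z_vw$.} Type 2 paths use only edges $vv'$, $v'b_v$ and $b_vt$. Type 3 paths use $vz'$, $z'z_v$ and $z_vt$. Type 4 paths use $vx$, $xx'$ and $x't$. None of these is $z_vw$ with $w\in Z_1$.
\item \emph{Edge $ww'$.} This lies inside $Z_1$ and avoids $v$, whereas every type 2 path touching $Z_1$ does so only through an edge at $v$.
\item \emph{Edge $w'z_1$.} This is not of the form $vz'$ or $z'z_v$, since $w'\neq v$ and $z_1\neq z_v$.
\item \emph{Edge $z_1t$.} Any edge of a previously added path that is incident to a terminal is the final edge of that path, and that path ends at that terminal. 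Since $t\in T^*$, no path of $\mathcal{P}_v$ ends at $t$, so $z_1t$ is unused. This covers in particular a type 4 path ending $x't$ with $x'=z_1$, which could only occur if it ended at $t$.
\end{itemize}

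\textbf{Conclusion and main obstacle.} Hence $P$ is acceptable for $\mathcal{P}_v$. This contradicts the maximality asserted in Claim \ref{claim:NoSolved}, so $|Z_1|\leq 2$, and therefore $|Z_1|=2$. The main obstacle is the bookkeeping of edge-disjointness, especially the edges at $z_v$ and at $t$. These are resolved by the choice $z_1\neq z_v$ and by the fact that all previously used terminal edges end at terminals already removed from $T^*$.
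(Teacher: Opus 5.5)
Your proof is correct and is essentially the paper's argument. The paper also assumes $|Z_1|\ge 3$, picks $v,w,w'\in Z_1$ and $z\in Z_2\setminus\{z_v\}$, and uses exactly the same odd path $(v,z_v,w,w',z,t)$ to contradict Claim~\ref{claim:NoSolved}; your edge-by-edge disjointness check is more explicit than the paper's appeal to the figures, with one wording slip: the type 2 edge $v'b_v$ does touch $Z_1$ away from $v$, but what you actually need, and what holds, is that every used edge with both ends in $Z_1$ is incident to $v$.
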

\begin{proofc}
    Suppose not. By Claim \ref{claim:Ztwo} this means that $|Z_1|\geq 3$, and we can choose three vertices $v, w, w'\in Z_1$. By Claim \ref{claim:Ztwo} we can also choose $z\in Z_2 \setminus z_v$.  See the top-right picture in Figure \ref{fig:claims789}. Then $z_v\sim w, z$, and $z\sim w', t$ for any $t\in T^*$. Moreover, $w\sim w'$ because $G[Z_1]$ is a clique (by Claim \ref{claim:cliques1}). Hence $P=(v , z_v , w, w',  z ,t)$ is an odd path  from $v$ to some $t \in T^*$, and looking at the paths of type 1-4 in Figure \ref{fig:oddpaths} compared with the path exhibited in the top-right picture in Figure \ref{fig:claims789}, we see that $P$ is acceptable for $\mathcal{P}_v$, contradicting Claim \ref{claim:NoSolved}.
 \end{proofc}

Let us now label the two vertices in $Z_1$, say $Z_1=\{v, w\}$. Since $G$ has no strong odd $K_t$-immersion, we know we must have some vertex $t\in T^*$ for $\mathcal{P}_v$. We know that $v\not\sim t$ as otherwise we would have added the edge $vt$ to $\mathcal{P}_v$ as a type 1 path.

Consider now the graph $H=G_1\setminus\{v, w, t\}$. Since $Z_1=\{v, w\}$ we know that $H=G_1[T_1\cup A_1]=G_1[M_1]\setminus\{t\}$, where $M_1 \subseteq X_1$ was chosen as an inclusion-wise minimal subset of $X_1$ such that $G[M_1]$ still contains $K_{t_1}$ as a strong odd immersion. So in particular, $H$ contains no strong odd $K_{t_1}$-immersion. By minimality of our $G$, and since $\alpha(H)\leq \alpha(G)\leq 2$, this means that $\chi(H)\leq \lceil\tfrac{3}{2}(t_1-1)\rceil=\tfrac{3}{2}(t_1-1)$, with the last equality due to the fact that $t_1$ is odd. Recall that $\chi(G_1)=\tfrac{3}{2}t_1+\tfrac{1}{2}$, so $\chi(G_1)-\chi(H)\geq 2$. In fact, we must have equality here, since any coloring of $H$ could be extended to $G_1$ using only two new colors: simply use one new color on $t, v$, and one other new color on $w$.

Let $\varphi$ be a $\chi(H)$-coloring of $H$, and consider the colors represented on the vertices of $N_{T_1}(w)$. If there is some color of $\varphi$ missing from this set, say color 1, then color one can be used on $w$, and some new color (say color $0$) can be used on both $v$ and $t$ to get a $(\chi(H)+1)$-coloring of $G_1$, contradicting $\chi(G_1)-\chi(H)=2$. So in fact $|N_{T_1}(w)|\geq \chi(H)$. Observe that the vertices of $N_{T_1}(w)$ are the terminals of some strong odd $K_{\chi(H)}$-immersion in $H$. Hence $G_1$ contains strong odd $K_{\chi(H)+1}$-immersion with terminals $N_{T_1}(w)\cup\{w\}$. However then $\chi(H)+1\leq t_1$ by definition of $t_1$, and since $\chi(G_1)-\chi(H)= 2$, this tells us that $\chi(G_1)\leq t_1+1$. Using $\chi(G_1)=\tfrac{3}{2}t_1+\tfrac{1}{2}$, this implies that $t_1\leq 3$. Then $G_1$ has no strong odd $K_{t_1+1}$-immersion, and it is known that $\chi(G_1)<t_1+1$. However now, since
$\chi(G_1)=\tfrac{3}{2}t_1+\tfrac{1}{2}$, this is a contradiction. \end{proof}

As we mentioned in the introduction, we don't believe the $\tfrac{3}{2}$ in the above theorem is optimal (in contrast to the situation for odd-minors), and indeed we conjecture that a better fraction can be found. Our proof method seems very much limited to $\tfrac{3}{2}$ however -- in particular, this is essential for Claim \ref{claim:NoSolved} and consequently for Claim \ref{claim:size2}, without which we are not able to get a better coloring for $G_1$.

We observe that $\alpha=2$ is used sparingly in our proof of Theorem \ref{thm:main}, but it is needed in order to apply Theorem \ref{thm:VergStrong}, and also crucially within the proof of Claim \ref{claim:cliques1}. For fixed but larger values of $\alpha$, we would need a different approach. When $\alpha=3$, we can use the statement of Theorem \ref{thm:main} to get the following modest improvement on the afore-mentioned result of Bustamante et al.

\begin{theorem} Let $t\in\mathbb{Z}^+$ and let $G$ be a graph. If $\alpha(G)=3$ and $G$ has no strong odd $K_t$-immersion, then $\chi(G)\leq 4(t-1)$.
\end{theorem}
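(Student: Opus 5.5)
Take a vertex $v$ of $G$ and split $V(G)$ into $N(v)$ and $R:=V(G)\setminus N[v]$. Then reduce each part to the $\alpha\le 2$ case handled by Theorem~\ref{thm:main}.

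First, $\alpha(G[R])\le 2$. Any independent set $I\subseteq R$ has $I\cup\{v\}$ independent, so $|I|\le \alpha(G)-1=2$. Since $G[R]$ is a subgraph of $G$, it has no strong odd $K_t$-immersion. Theorem~\ref{thm:main} therefore gives
\[
\chi(G[R])\le \left\lceil \tfrac32(t-1)\right\rceil .
\]
Adding $v$ itself costs nothing: $v$ has no neighbours in $R$, so it can take any colour already used on $R$ (or a single extra colour if $R=\emptyset$).

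Second, we colour $N(v)$, which is the harder part because $\alpha(G[N(v)])$ may still be $3$. We iterate: choose $u\in N(v)$ and split $N(v)$ into $N(v)\cap N(u)$ and $N(v)\setminus N[u]$. By the same argument, $G[N(v)\setminus N[u]]$ has independence number at most $2$, so Theorem~\ref{thm:main} colours it with at most $\lceil\tfrac32(t-1)\rceil$ colours, and $u$ can reuse one of them. Continuing in this way leaves a nested sequence of common neighbourhoods of a growing clique $v,u,\dots$. That sequence is the cost problem, so the iteration needs a cutoff.

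The cutoff uses the clique itself. Once the clique $Q$ has $t-1$ vertices, the common neighbourhood $W$ of $Q$ can be handled directly. If $W$ is nonempty, a vertex of $W$ together with $Q$ gives a $K_t$ subgraph. Since a clique is trivially a strong odd immersion with all paths single edges, this is a contradiction, so $W=\emptyset$. The naive count is then up to $t-1$ pieces, each needing $\lceil\tfrac32(t-1)\rceil$ colours, which is far too many to reach $4(t-1)$.

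The main obstacle is therefore to stop after a bounded number of rounds. Here is the first idea I would try. Let $K$ be a maximum clique, so $|K|\le t-1$. Every vertex $x\notin K$ has a non-neighbour in $K$, otherwise $K\cup\{x\}$ is a larger clique. Assign each $x\notin K$ to the first such non-neighbour $k\in K$. The class assigned to $k$ lies in $V(G)\setminus N[k]$, so it induces a graph with $\alpha\le 2$. This is still $|K|$ classes, which is too many. It can be fixed by grouping the classes: a union of classes whose designated vertices are pairwise adjacent does not obviously keep $\alpha\le 2$, so the grouping has to be chosen with care.

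My plan is instead a two-round version that matches the target bound. The goal is to show that $G[N(v)]$ itself satisfies $\chi\le \lfloor\tfrac52(t-1)\rfloor$ for a suitably chosen $v$. Combined with $\lceil\tfrac32(t-1)\rceil$ for $R\cup\{v\}$, this gives
\[
\chi(G)\le \left\lceil\tfrac32(t-1)\right\rceil+\left\lfloor\tfrac52(t-1)\right\rfloor=4(t-1).
\]
To get the bound on $N(v)$, I would pick $v$ to be a vertex of minimum degree in a $\chi(G)$-critical subgraph. Then I would aim to show that $G[N(v)]$ has no strong odd $K_{t-1}$-immersion: any such immersion extends to a $K_t$-immersion using the edges from $v$ to its terminals. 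This would let the induction hypothesis, applied at the smaller value $t-1$, bound $\chi(G[N(v)])$. Making this step rigorous, in particular handling the case where $N(v)$ still has independence number $3$, is where I expect the real work to lie.
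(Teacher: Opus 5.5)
Your proposal does not prove the statement. Everything rests on the bound $\chi(G[N(v)])\le\lfloor\frac52(t-1)\rfloor$, which you set as a goal but never establish, and the route you suggest cannot deliver it.

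Your observation that $G[N(v)]$ has no strong odd $K_{t-1}$-immersion is correct: adjoin $v$ as a terminal, joined to the old terminals by single edges, which are edge-disjoint from everything inside $N(v)$ and have no interior vertices. But when $\alpha(G[N(v)])=3$, the induction hypothesis at $t-1$ gives only $\chi(G[N(v)])\le 4(t-2)$. Since $4(t-2)+\lceil\frac32(t-1)\rceil>4(t-1)$ for every $t\ge 4$, this is too weak. Iterating the split gives a recursion of the form $f(t)\le f(t-1)+\lceil\frac32(t-1)\rceil$, which is quadratic in $t$; so is your maximum-clique partition into up to $t-1$ classes of independence number at most $2$.

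Choosing $v$ of minimum degree in a critical subgraph does not help either. Criticality only tells you $d(v)\ge\chi(G)-1$, a lower bound that gives no upper bound on $\chi(G[N(v)])$. The structural defect is that splitting $V(G)$ into $N(v)$ and $V(G)\setminus N[v]$ keeps the immersion parameter at $t$ on one side and lowers it by only one on the other. A bound linear in $t$ therefore cannot be summed to $4(t-1)$.

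The missing idea is a decomposition along which the immersion parameter splits additively. The paper takes a vertex-minimal counterexample $G$ with $k=\chi(G)>4(t-1)$, hence $k$-vertex-critical. It uses the bound $\lfloor n/4.5\rfloor\le t$ of Bustamante, Quiroz, Stein and Zamora to get $n\le 2k-2$, handling the case $\lfloor n/4.5\rfloor\le 3$ by hand. Gallai's theorem (Theorem~\ref{Gallai}) then yields a partition $V(G)=X_1\cup X_2$ with $G[X_1,X_2]$ complete bipartite.

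Since every cross edge is present, a strong odd $K_{t_1}$-immersion in $G_1=G[X_1]$ and a strong odd $K_{t_2}$-immersion in $G_2=G[X_2]$ combine into a strong odd $K_{t_1+t_2}$-immersion, using single edges between the two terminal sets. Hence the maximal $t_i$ satisfy $t_1+t_2\le t-1$. Then $\chi(G)=\chi(G_1)+\chi(G_2)\le 4t_1+4t_2\le 4(t-1)$, using minimality when $\alpha(G_i)=3$ and Theorem~\ref{thm:main} when $\alpha(G_i)\le 2$. It is the join structure, giving $t_1+t_2\le t-1$, that lets a linear bound add up; a vertex-neighbourhood split has no analogue of this.
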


\begin{proof} Let $G$ be a vertex-minimal counterexample, so in particular
$\alpha=\alpha(G)= 3$, $G$ has no strong odd $K_t$-immersion, and $k:= \chi(G)> 4(t-1)$. We may assume that $t\geq 5$ as otherwise the result holds. By the theorem of Bustamante et al., we get $\lfloor \frac{n}{4.5}\rfloor \leq t$ with $n:=|V(G)|$. 

We first suppose that $\lfloor \frac{n}{4.5}\rfloor \leq 3$. Then $n\leq 17$. If $t\geq 6$, then $17 \geq n\geq \chi(G)> 4(t-1)\geq 20$, contradiction. So we may suppose that $t=5$ and $\chi(G)> 4(t-1)= 16$ with $n=17$. But since a non-complete graph on 17 vertices is 16-colorable, we get a contradiction.  

We may now assume that $\lfloor \frac{n}{4.5}\rfloor \geq 4$. Setting $\ell=\tfrac{n}{4.5}$ we note that $\tfrac{9}{16}\ell\leq \lfloor\ell\rfloor-1$ (since $\lfloor\ell\rfloor\geq 4$), so we get that
 $\frac{n}{8}\leq\lfloor \frac{n}{4.5}\rfloor-1 \leq t-1$. This tells us that $\frac{n}{2} \leq 4(t-1)$, from which we get $\frac{n}{2}\leq k -1$ and  $n\leq 2k-2$. Note that $G$ is $k$-critical by minimality. Since $n\leq 2k-2$ we apply Theorem \ref{Gallai} and get a partition of $V(G)$ into non-empty sets $X_1, X_2$ such that $x_1 x_2 \in E(G)$ for all $x_1 \in X_1$ and $x_2 \in X_2$. Let $G_1 := G[X_1]$ and $G_2 := G[X_2]$, and note that we have $\chi(G) = \chi(G_1) + \chi(G_2)$. For $i \in \{1, 2\}$, let $t_i$ denote the largest positive integer such that $G_i$ has $K_{t_i}$ as a strong odd immersion; note that $G$ has a strong odd immersion of $K_{t_1+t_2}$, so $t_1+t_2\leq t-1$. Let $\alpha_i := \alpha(G_i) \leq \alpha \leq 3$. If $\alpha_i = 3$ we get $\chi(G_i)\leq 4t_i$ (since $G_i$ does not have $K_{t_i+1}$ as strong odd immersion) by the minimality of $G$, and otherwise we get this (and in fact an even better result) by Theorem \ref{thm:main}. Hence we get the following:
$$\chi(G) = \chi(G_1) + \chi(G_2) \leq 4t_1+4t_2 = 4 (t_1+t_2)\leq 4(t-1)<\chi(G),$$
which is a contradiction.
\end{proof}

\section*{Appendix}

\begin{apptheorem}\label{thm:VergStrong}
    Every $n$-vertex graph $G$ with $\alpha(G)\leq 2$ has a strong odd $K_{\lceil n/3\rceil}$-immersion.
\end{apptheorem}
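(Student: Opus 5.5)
The plan is to choose a terminal set $T$ of size $k:=\lceil n/3\rceil$ and link every pair of terminals by an odd path. Adjacent terminals are linked by the edge between them. Each nonadjacent pair $x,y\in T$ is linked by a path $(x,a,b,y)$ of length three, where $a,b\notin T$ and $x\sim a\sim b\sim y$. Every such path has odd length and no terminal in its interior, so the immersion is automatically strong and odd. The only thing to arrange is edge-disjointness: each nonterminal edge $ab$ may be used by at most one pair, and each edge $xa$ from a terminal to a nonterminal may be used at most once.

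The key structural input is $\alpha(G)\le 2$. If $x,y$ are nonadjacent, then every other vertex is adjacent to $x$ or to $y$, so $V(G)\setminus\{x,y\}$ is covered by $N(x)\cup N(y)$. I would choose $T$ to keep the number of nonadjacent terminal pairs small. Start from a maximum clique $Q$. If $|Q|\ge k$, take $T\subseteq Q$ and we are done. Otherwise, use that the complement $\overline{G}$ is triangle-free. Then $\overline{G}[T]$ is triangle-free, and the non-neighbours of any vertex form a clique in $G$, which makes the missing pairs inside $T$ structured; in particular each terminal's non-neighbours in $T$ form a clique. The $n-k\ge 2n/3$ nonterminals form the reservoir $R$ for the middle edges. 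Note that $G[R]$ also has $\alpha\le 2$, so it is very dense: every vertex of $R$ misses at most a clique's worth of the others.

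For a nonadjacent pair $x,y$, any $a\in N(x)\cap R$ and $b\in N(y)\cap R$ with $a\sim b$ give a valid path. Since every vertex of $R$ lies in $N(x)\cup N(y)$, the sets $N(x)\cap R$ and $N(y)\cap R$ cover $R$, and there are many edges between them. I would then assign middle edges greedily, or via a Hall-type argument, processing the nonadjacent pairs one terminal at a time. Consider a terminal $x$ with $d$ non-neighbours in $T$. It needs $d$ distinct edges $xa$ into $R$, and this is available when $d\le |N(x)\cap R|$. If that fails, $x$ has few neighbours in $R$, so $R\setminus N(x)$ is a large clique; that clique is a source of pairwise distinct middle edges and distinct attachment points for the partner terminals. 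The counting should close because $|R|\ge 2|T|$ roughly. This is exactly where the constant $3$ enters: we need about two fresh reservoir vertices per terminal so that distinct $a$'s and $b$'s can be chosen.

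The main obstacle is this assignment step: guaranteeing that the middle edges $ab$ and the attaching edges $xa$, $yb$ are pairwise distinct across all pairs simultaneously. If a direct greedy argument runs into trouble, I would first try induction on $n$ in steps of three. Delete a nonadjacent pair $x,y$ together with a suitable vertex, apply induction to get a strong odd $K_{k-1}$-immersion, and add back one of the deleted vertices as a new terminal joined to the old terminals. Its non-neighbours among the old terminals form a clique, which should let those pairs be routed through the deleted vertices and edges not used by the inductive immersion. This follows the technique of Bustamante, Quiroz, Stein and Zamora \cite{BustamanteQSZ2021}.
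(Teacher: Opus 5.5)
\textbf{There is a genuine gap.} The direct construction is not yet a proof. You never say how $T$ is chosen when $\omega(G)<\lceil n/3\rceil$. The edge-disjoint assignment of attaching edges $xa,yb$ and middle edges $ab$, which you yourself flag as the main obstacle, is supported only by the heuristic that the counting ``should close because $|R|\ge 2|T|$ roughly''. The remark that $R\setminus N(x)$ is a large clique when $x$ has few neighbours in $R$ is never turned into a routing or into a contradiction. So the entire content of the theorem sits in an unproved step.

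Your fallback is essentially the route the paper takes. The paper deletes only a non-adjacent pair $u,v$, applies minimality to $G-\{u,v\}$ to get terminals $T$ with $|T|=\lceil (n-2)/3\rceil$, and uses $\lceil (n-2)/3\rceil+1\ge\lceil n/3\rceil$. However, the lever you name is the wrong one, and the decisive counting is missing. The fact that $v$'s non-neighbours in $T$ form a clique produces no routes. What the paper's proof actually uses is the following.
\begin{enumerate}
\item Every $t\in T\setminus N(v)$ is adjacent to $u$, since otherwise $\{u,v,t\}$ is independent. So each common neighbour $w$ of $u$ and $v$ in $G''=G-\{u,v\}-T$ gives the odd path $(v,w,u,t)$. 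These paths use only edges at $u$ or $v$, so they are disjoint from the inductive immersion and have no terminal in their interior; distinct $t$ need distinct $w$.
\item To guarantee enough such $w$, you need $\delta(G)\ge\lfloor 2n/3\rfloor$. This holds because a vertex of smaller degree has at least $\lceil n/3\rceil$ non-neighbours, and these form a clique.
\item Then $|N_{G''}(u)\cap N_{G''}(v)|=d(v)-d_T(v)-|\overline{N}_{G''}(u)|$. If this is less than $|T\setminus N(v)|$, one gets $|\overline{N}_{G''}(u)|\ge\lfloor 2n/3\rfloor-\lceil (n-2)/3\rceil+1$. In that case $\overline{N}_{G''}(u)\cup\{v\}$ is a clique of size at least $\lceil n/3\rceil$, which is a contradiction.
\end{enumerate}
None of these three ingredients (the routing through the other deleted vertex, the degree bound, the clique contradiction) appears in your proposal. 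The same counting would still be needed if you delete three vertices instead of two. So the induction as you sketched it does not close either.
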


\begin{proof}
    Suppose not, and choose a counterexample $G$ with $n$ minimum; $G$ is an $n$-vertex graph with $\alpha(G)\leq 2$ that has no strong odd $K_{\lceil n/3\rceil}$-immersion. Denote the non-neighborhood of any $x\in V(G)$ by $\overline{N}_G(x)=\{y| y\not\sim x, y\neq x, y\in V(G)\}$; note that every $G[\overline{N}_G(x)]$ is a clique since $\alpha(G)\leq  2$.

    \begin{claim*}
        $\delta(G) \ge \lfloor \frac{2n}{3}\rfloor$
    \end{claim*}
    \begin{proofc}
        Suppose, on the contrary, there is a vertex $x\in V(G)$ such that $d(x) \leq  \lfloor \frac{2n}{3}\rfloor -1$. Then 
       $$
        |\overline{N}_G(x)| \ge  n  - 1- (\left \lfloor \tfrac{2n}{3} \right \rfloor -1) 
        \geq n - \lfloor\tfrac{2n}{3}\rfloor
        = \lceil\tfrac{n}{3}\rceil,      $$
    which is a contradiction since $G[\overline{N}_G(x)]$ is a clique. 
    \end{proofc}

 Since $G$ is not a clique we can choose $\{u,v\}$ an independent set of size two in $G$. Consider $G' = G -\{u,v\}$. Note that $G'$ has a strong odd $K_{\lceil (n-2)/3\rceil}$-immersion by minimality; let $T$ be the set of terminals for some such immersion. We will aim to increase the size of our immersion by one by adding $v$ to this set of terminals; note that this would give our desired contradiction since $\left \lceil \tfrac{n-2}{3} \right \rceil +1 \ge \left \lceil \tfrac{n}{3} \right \rceil$.

Let $G'' = G' - T$ and consider the set $N_{G''}(u) \cap N_{G''}(v)$. We must find an odd path from $v$ to every vertex $t\in T\setminus N_T(v)$. Since $\alpha(G)\leq 2$ and $u\not\sim v$, we know that $u\sim t$ for all such $t$. So each vertex $w \in N_{G''}(u) \cap N_{G''}(v)$ provides the odd path $(v, w, u, t)$ from $v$ to $t$; these paths are clearly edge-disjoint when $w, t$ are both different. So it must be the case that $|N_{G''}(u) \cap N_{G''}(v)| < |T\setminus N_T(v)|.$ 

Note that  $|N_{G''}(u) \cap N_{G''}(v)| =d(v) - d_T(v) - \overline{N}_{G''}(u) $, since every non-neighbor of $u$ in $G''$ must be a neighbor of $v$ (as  $\alpha(G)\leq 2$ and $u\not\sim v$). Since $|T|=\lceil \tfrac{n-2}{3} \rceil$ and $d(v)\geq \lfloor \frac{2n}{3}\rfloor$, the conclusion of the last paragraph gives us
$$  \lfloor\tfrac{2n}{3}\rfloor  - d_T(v) - \overline{N}_{G''}(u) <\left \lceil \tfrac{n-2}{3} \right \rceil - d_T(v),$$
and hence
    \begin{equation}\label{eq:clique} \left \lfloor \tfrac{2n}{3} \right \rfloor - \left \lceil \tfrac{n-2}{3} \right \rceil +1 \leq  \overline{N}_{G''}(u).\end{equation}
The vertices of $\overline{N}_{G''}(u) \cup \{v\}$ induce a clique, since $u\not\sim v$ and $\alpha(G)\leq 2$. The quantity on the left-hand side of (\ref{eq:clique}) is: $\tfrac{2n}{3}-\tfrac{n}{3}+1>\lceil\tfrac{n}{3}\rceil$ when $n\equiv 0$ (mod 3); $\tfrac{2n-2}{3}-\tfrac{n-1}{3}+1=\lceil\tfrac{n}{3}\rceil$ when $n\equiv 1$ (mod 3), and; $\tfrac{2n-1}{3}-\tfrac{n-2}{3}+1>\lceil\tfrac{n}{3}\rceil$ when $n\equiv 2$ (mod 3). In all these cases we find a clique of size at least $\lceil\tfrac{n}{3}\rceil$ in $G$, which is a contradiction.
\end{proof}

\end{document}